\newcommand{\RR}{\mathbb{R}}
\newcommand{\nnegm}{\in\RR^{n\times n}_+}
\newcommand{\nnegv}{\in\RR^{n}_+}
\newcommand{\hmu}{\hat{\mu}}
\newcommand{\setC}{\mathcal C}
\newcommand{\setD}{\mathcal D}
\newcommand{\Psim}{\Psi_{\otimes}^p}
\newcommand{\hPsim}{\hat\Psi_{\otimes}^p}
\newcommand{\diag}{\operatorname{diag}}
\newtheorem{theorem}{Theorem}[section]
\newtheorem{lemma}{Lemma}[section]
\newtheorem{proposition}{Proposition}[section]
\newtheorem{corollary}{Corollary}[section]
\newtheorem{example}{Example}[section]
\newtheorem{remark}{Remark}[section]
\newtheorem{definition}{Definition}[section]
\begin{document}

\title{The Analytic Hierarchy Process, Max Algebra and Multi-objective Optimisation}

\author{Buket Benek Gursoy\thanks{Hamilton Institute, National University of Ireland,
Maynooth, Co. Kildare, Ireland. Email: buket.benek@nuim.ie}\and%
Oliver Mason\thanks{Corresponding author. Hamilton Institute, National University of Ireland,
Maynooth, Co. Kildare, Ireland. Email: oliver.mason@nuim.ie}\and%
Serge\u{\i} Sergeev\thanks{University of Birmingham, School of Mathematics, Edgbaston B15 2TT.
Email: sergiej@gmail.com}}

\date{}
\maketitle



\begin{abstract}
The Analytic Hierarchy Process (AHP) is widely used for decision making involving multiple criteria.  Elsner and van den Driessche~\cite{Drie3, Drie4} introduced a max-algebraic approach to the single criterion AHP.  We extend this to the multi-criteria AHP, by considering multi-objective generalisations of the single objective optimisation problem solved in these earlier papers.  We relate the existence of globally optimal solutions to the commutativity properties of the associated matrices; we relate min-max optimal solutions to the generalised spectral radius; and we prove that Pareto optimal solutions are guaranteed to exist. \\

{\em Keywords:} Analytic Hierarchy Process (AHP), {\it{SR}}-matrix, max algebra, subeigenvector, generalised spectral radius, multi-objective optimization.\\

{\em AMS codes:} 91B06, 15A80, 90C29

\end{abstract}

\section{Introduction}
\label{sec:intro}

The analytic hierarchy process (AHP) is a method for ranking alternatives in multi-criteria decision making problems.  Developed by Saaty \cite{saaty77}, it consists of a three layer hierarchical structure: the overall goal is at the top; the criteria are in the next level; and the alternatives are in the bottom level.  The AHP has been used in  many different areas including manufacturing systems, finance, politics, education, business and industry; for more details on the method, see the monographs by Saaty-Vargas and Vaidya-Kumar~\cite{saaty01, VaidKum}.

The essence of the AHP can be described as follows.  Given $n$ alternatives we construct a {\it{pairwise comparison matrix}} ({\it{PC}}-matrix), $A>0$ for each criterion, in which $a_{ij}$ indicates the strength of alternative $i$ relative to alternative $j$ for that criterion.  A {\it{PC}}-matrix with the property that $a_{ij}a_{ji}=1$ for all $i\neq j$ and $a_{ii}=1$ for all $i$ is called a {\it{symmetrically reciprocal}} matrix ({\it{SR}}-matrix) \cite{Farkas}. (Note that this abbreviation might clash with the strongly regular matrices 
of Butkovi\v{c}~\cite{ButkovicBook}, but not in this paper.) 

Once an {\it{SR}}-matrix $A$ is constructed, the next step in the AHP is to derive a vector $(w_1, \ldots , w_n)$ of positive weights, which can be used to rank the alternatives, with $w_i$ quantifying the weight of alternative $i$.  As observed by
Elsner and van~den~Driessche~\cite{Drie3}, the ideal situation is where $a_{ij} = w_i/w_j$, in which case the {\it{SR}}-matrix is {\it transitive}.  In practice, this will rarely be the case and it is necessary to approximate $A$ with a transitive matrix $T$, where $t_{ij} = w_i/w_j$ for some positive weight vector $w=(w_1, \ldots , w_n)$.  The problem is then how to construct $T$ given $A$.  Several approaches have been proposed including Saaty's suggestion to take $w$ to be the Perron vector of $A$, or the approach of Farkas et al.~\cite{Farkas}, which chooses $w$ to minimise the Euclidean error $\sum\limits_{i,j} (a_{ij} - w_i/w_j)^2$. Elsner and 
van~den~Driessche~\cite{Drie3, Drie4} suggested selecting $w$ to be the max algebraic eigenvector of $A$.  This is similar in spirit to Saaty's approach and also generates a transitive matrix that minimises the maximal relative error $\max\limits_{i,j}|a_{ij} - w_i/w_j|/a_{ij}$.  As noted in \cite{Drie4}, 
minimising this functional is equivalent to minimising
\begin{equation}
\label{eq:errf}
e_A(x)=\max\limits_{1\leq i,j\leq n}a_{ij}x_j/x_i.
\end{equation}
The different approaches to approximating an {\it {SR}}-matrix $A$ with a transitive matrix $T$ will in general produce different rankings of the alternatives.  The question of how these rankings are affected by the choice of scheme is considered in the recent paper of Ngoc~\cite{Ngoc}.  

In the classical AHP involving multiple criteria, a set of {\it{SR}}-matrices is constructed: one for each criterion.  One additional {\it{SR}}-matrix is constructed based on comparisons of the different criteria.  Once weight vectors are obtained for each individual criterion, these are then combined using the entries of the weight vector for the criteria-comparison matrix.  As an illustration, we take the following numerical example from Saaty~\cite{saaty77} and show how the Perron vectors of the comparison matrices are used to construct a weight vector.

\begin{example}
\label{ex:Saaty}
The problem considered is deciding where to go for a one week vacation among the alternatives: 1. Short trips, 2. Quebec, 3. Denver, 4. California. Five criteria are considered: 1. cost of the trip, 2. sight-seeing opportunities, 3. entertainment, 4. means of travel and 5. dining. The {\it{PC}}-matrix for the criteria and its Perron vector are given by
$$C=\left [ \begin{array}{ccccc}
   1 & 1/5 & 1/5 & 1 & 1/3 \\
   5 & 1 & 1/5 & 1/5 & 1 \\
   5 & 5 & 1 & 1/5 & 1 \\
   1 & 5 & 5 & 1 & 5 \\
   3 & 1 & 1 & 1/5 & 1 \\
\end{array} \right ]\quad \text{and}\quad
c=\left [ \begin{array}{c}
   0.179  \\ 0.239  \\ 0.431  \\ 0.818 \\  0.237  \\
\end{array} \right ].
$$
The above matrix $C$ describes the pairwise comparisons between the different \emph{criteria}.  For instance, as $c_{21} = 5$, criterion 2 is rated more important than criterion 1; $c_{32} = 5$ indicates that criterion 3 is rated more important than criterion 2 and so on.  The vector $c$ contains the weights of the criteria; in this method, criterion 4 is given most weight, followed by criterion 3 and so on.

The {\it{SR}}-matrices, $A_1, ..., A_5$,  for each of the 5 criteria, their Perron vectors and corresponding ranking schemes are given below. For instance, for criterion 1, the first alternative is preferred to the second as the $(1, 2)$ entry of $A_1$ is $3$.  Similarly, for criterion 3, the 4th alternative is preferred to the 1st as the $(4, 1)$ entry of $A_3$ is $2$.  

For the cost of the trip:
\begin{equation*}
A_1=\left [ \begin{array}{cccc}
   1 & 3 & 7 & 9 \\
   1/3 & 1 & 6 & 7 \\
   1/7 & 1/6 & 1 & 3 \\
   1/9 & 1/7 & 1/3 & 1 \\
\end{array} \right ],   \quad
v^{(1)}=\left [ \begin{array}{c}
   0.877  \\ 0.46  \\ 0.123  \\ 0.064 \\
\end{array} \right ],
  \quad 1>2>3>4
\end{equation*}

For the sight-seeing opportunities:
\begin{equation*}
A_2=\left [ \begin{array}{cccc}
   1 & 1/5 & 1/6 & 1/4 \\
   5 & 1 & 2 & 4 \\
   6 & 1/2 & 1 & 6 \\
   4 & 1/4 & 1/6 & 1 \\
\end{array} \right ],   \quad
v^{(2)}=\left [ \begin{array}{c}
   0.091  \\ 0.748  \\ 0.628  \\ 0.196 \\
\end{array} \right ],
  \quad 2>3>4>1
\end{equation*}

For the entertainment:
\begin{equation*}
A_3=\left [ \begin{array}{cccc}
   1 & 7 & 7 & 1/2 \\
   1/7 & 1 & 1 & 1/7 \\
   1/7 & 1 & 1 & 1/7 \\
   2 & 7 & 7 & 1 \\
\end{array} \right ],  \quad
v^{(3)}=\left [ \begin{array}{c}
    0.57 \\ 0.096  \\ 0.096  \\ 0.81 \\
\end{array} \right ],
  \quad 4>1>2=3
\end{equation*}

For the means of travel: 
\begin{equation*}
A_4=\left [ \begin{array}{cccc}
   1 & 4 & 1/4 & 1/3 \\
   1/4 & 1 & 1/2 & 3 \\
   4 & 2 & 1 & 3 \\
   3 & 1/3 & 1/3 & 1 \\
\end{array} \right ],  \quad
v^{(4)}=\left [ \begin{array}{c}
    0.396 \\  0.355 \\ 0.768  \\ 0.357 \\
\end{array} \right ],
  \quad 3>1>4>2
\end{equation*}

For the dining:
\begin{equation*}
A_5=\left [ \begin{array}{cccc}
   1 & 1 & 7 & 4 \\
   1 & 1 & 6 & 3 \\
   1/7 & 1/6 & 1 & 1/4 \\
   1/4 & 1/3 & 4 & 1 \\
\end{array} \right ], \quad
v^{(5)}=\left [ \begin{array}{c}
    0.723 \\ 0.642  \\ 0.088  \\ 0.242 \\
\end{array} \right ],
  \quad 1>2>4>3
\end{equation*}
To obtain the overall weight vector, we compute the weighted sum $\sum\limits_{i=1}^5 c_i v^{(i)}$.  This gives  
$$w=\left [ \begin{array}{c}
   0.919 \\ 0.745  \\ 0.862 \\ 0.757 \\
\end{array} \right ]
$$
with the associated ranking: $1 > 3 > 4 > 2$.
\end{example}
Our work here is inspired by the max-algebraic approach to the AHP introduced by Elsner and van~den~Driessche~\cite{Drie3, Drie4} and extends it in the following manner.  In~\cite{Drie3, Drie4}, the max eigenvector is used as a weight vector for a \emph{single criterion} and it is shown to be optimal in the sense of minimising the maximal relative error as discussed above.  This work naturally raises the question of how to treat multiple criteria within the max-algebraic framework.  We address this question here by considering the multi-criteria AHP as a multi-objective optimisation problem, in which we have an objective function of the form (\ref{eq:errf}) for each criterion (and associated {\it SR}-matrix).  Rather than combining individual weight vectors as in Example \ref{ex:Saaty}, we consider three approaches within the framework of multi-objective optimisation, and use the optimal solution as a weight vector in each case.  The advantage of this approach is that the weight vector can be interpreted in terms of the maximal relative error functions (\ref{eq:errf}) associated with the {\it SR}-matrices given as data for the problem.  
The optimisation problems we consider are the following.  First, we investigate the existence of a single {\it{transitive}} matrix with a minimum distance to all matrices in the set simultaneously.  We remark that this amounts to finding a common subeigenvector of the given matrices.  Clearly, this will not in general be possible.  The second problem we consider is to obtain a transitive matrix that minimises the maximal distance to any of the given {\it{SR}}-matrices.  The third problem concerns the existence of a transitive matrix that is Pareto optimal for the given set of matrices.  To illustrate our results, we revisit Example \ref{ex:Saaty} towards the end of the paper.

\section{Notation and Mathematical Background}
\label{sec:prelim}
The set  of all nonnegative real numbers is denoted by $\RR_+$; the set of all $n$-tuples of nonnegative real numbers is denoted by $\RR_+^n$ and the set of all ${n\times n}$ matrices with nonnegative real entries is denoted by $\RR_+^{n\times n}$.  We denote the set of all $n$-tuples of positive real numbers by $\textrm{int}(\RR^n_+)$.  For $A\in \RR_+^{n{\times}n}$ and $1\le i, j \le n$, $a_{ij}$ refers to the $(i, j)^{\text{th}}$ entry of $A$. The matrix $A=[a_{ij}]$ is nonnegative (positive) if $a_{ij} \ge 0$ ($a_{ij} >0$) for $1\le i, j \le n$. This is denoted by $A\nnegm$ ($A > 0$).

The weighted directed graph of $A$ is denoted by $D(A)$. It is an ordered pair $(N(A), E(A))$ where $N(A)$ is a finite set of nodes $\{1, 2, ..., n\}$ and $E(A)$ is a set of directed edges, with an edge $(i, j)$ from $i$ to $j$ if and only if $a_{ij} > 0$. A path is a sequence of distinct nodes $ i_1,i_2, \ldots, i_k$ of length $k-1$ with the weight $a_{i_1i_2}a_{i_2i_3} \cdots a_{i_{k-1}i_k}$, where $(i_p, i_{p+1})$ is an edge in $D(A)$ for $p = 1, \ldots, k-1$. It is standard that $A$ is an irreducible matrix if and only if there is a directed path between any two nodes in $D(A)$. A cycle $\Gamma$ of length $k$ is a closed path of the form $i_1, i_2, ..., i_k, i_1$. The $k^{th}$ root of its weight is called its cycle geometric mean. For a matrix $A\nnegm$, the maximal cycle geometric mean over all possible cycles in $D(A)$ is denoted by $\mu(A)$. A cycle with the maximum cycle geometric mean is called a critical cycle. Nodes that lie on some critical cycle are known as critical nodes and denoted by $N^C(A)$. The set of edges belonging to critical cycles are said to be critical edges and denoted by $E^C(A)$. The critical matrix of $A$ \cite{Drie1, Drie2}, $A^C$, is formed from the submatrix of $A$ consisting of the rows and columns corresponding to critical nodes as follows. Set $a^C_{ij} = a_{ij}$ if $(i, j)$ lies on a critical cycle and $a^C_{ij} = 0$ otherwise. We use the notation $D^C(A)$ for the critical graph where $D^C(A)=D(A^C)=(N^C(A), E^C(A))$.

The max algebra consists of the set of nonnegative numbers together with the two basic operations $a\oplus b = \max(a,b)$ and $ a\otimes b = ab $.  This is isomorphic to the max-plus algebra \cite{SandL, ButkovicBook} via the natural isomorphism $x \rightarrow \textrm{log}(x)$.  These operations extend to nonnegative matrices and vectors in the obvious manner \cite{SandL, Bapat, CGreen}. For $A$ in $\RR_+^{n\times n}$, the eigenequation in the max algebra is given by $A\otimes x = \lambda x,\quad x \geq 0, \quad\lambda \geq 0.$ $\mu(A)$ is the largest max eigenvalue of $A$ \cite{ButkovicBook}. If $A$ is irreducible, then it is the unique max eigenvalue of $A$ and there is a positive max eigenvector $x\in \textrm{int}(\RR^n_+)$ corresponding to it \cite{SandL, Bapat}. The eigenvector $v$ is unique up to a scalar multiple if and only if $A^C$ is irreducible. 

Observe that an {\it{SR}}-matrix $A$ is irreducible and $\mu(A)\ge 1$ \cite{Drie3}.
Although our primary interest is in {\it{SR}}-matrices, it is noteworthy that many of our results also hold true for non-zero reducible matrices. 

For $A\nnegm$ with $\mu(A)\le 1$, $I\oplus A\oplus A_{\otimes}^2\oplus ...$ converges to a finite matrix called the Kleene star of $A$ given by $A^*=I\oplus A\oplus A_{\otimes}^2\oplus ... \oplus A_{\otimes}^{n-1}$ where $\mu(A^*)= 1$ \cite{SandL, CGreen, SergeiVS}. Here, $a^*_{ij}$ is the maximum weight of a path from $i$ to $j$ of any length \cite{MPatWork} (if $i\neq j$), 
and $A_{\otimes}^k$ denotes the $k^{\text{th}}$ max-algebraic power of $A$.
Note that for each $A\nnegm$, if $A^*$ is finite then the max-algebraic sum of
all of the columns of $A^*$ is positive. For $A\nnegm$, the set of subeigenvectors of $A$ associated with $\mu(A)$ is called a subeigencone of $A$ and denoted by $V^*(A)=\{y\nnegv\mid A\otimes y\le \mu(A)y\}$ \cite{SergeiVS} . It was shown in Proposition 2.5 of \cite{SergeiVS} that for $A\nnegm$ with $\mu(A)=1$, $V^*(A)=V(A^*)=\operatorname{span}_{\oplus}(A^*)$ where $V(A^*)$ denotes the eigencone of $A^*$ consisting of its max eigenvectors. Note that the above-mentioned max-algebraic
sum of all columns of $A^*$ is in $V^*(A)$~\cite{SergeiVS}, so $V^*(A)$ contains positive vectors.
Note that if $\mu(A)>0$, we can normalise $A$ by $\mu(A)$ and $V^*(\frac{A}{\mu(A)})=V^*(A)$.

To the authors' knowledge, max-algebraic subeigenvectors appeared in the works of Gaubert~\cite{GaubertThes, GaubertRopt}. However,
they can be traced back to earlier works on nonnegative matrix scaling, see references in Butkovi\v{c}-Schneider~\cite{ButSchn}.

For $A\nnegm$ we will consider the following set, which was introduced in \cite{Drie4}
\begin{equation}
\label{eq:CAr}
\setC_{A,r}=\{x \in \textrm{int}(\RR^n_+)\mid A\otimes x\le r x\}.
\end{equation}
For the special case of $r = \mu(A)$, $\setC_{A,\mu(A)}$ is denoted by $\setC_A$ \cite{Drie4}. Obviously $\setC_A$
is the positive part of $V^*(A)$ (which is non-empty as we argued above), and it coincides with
$V^*(A)$ when $A$ is irreducible.  To be consistent with the notation of \cite{Drie4}, we recall the definition of the normalised set
\begin{equation}
\label{eq:DAr}
\setD_{A, r} = \{x \in \setC_{A, r} \mid x_1 = 1\}.
\end{equation}
As above, $\setD_A$ is used to denote the special case where $r = \mu(A)$.


The relations between the sets $\setC_{A, r}$, the error function (\ref{eq:errf}) and $\mu(A)$ were clarified by Elsner and van~den~Driessche~\cite{Drie4} and are recalled in the following propositions, which we easily extend (based on~\cite{ButkovicBook, ButSchn, SergeiVS})
to the general reducible case.
\begin{proposition}[cf. \cite{Drie4} Lemma 2]
\label{pro:CArprop}
Let $A\nnegm$ be nonzero. Then:
\begin{itemize}
\item[{(i)}] $\setC_{A,r}\neq \emptyset \iff r>0,$ $r\ge \mu(A)$;
\item[{(ii)}] $x\in \setC_{A,r}\iff x\in \operatorname{int}(\RR^n_+),$ $e_A(x)\le r$.
\end{itemize}
\end{proposition}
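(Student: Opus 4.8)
The plan is to treat part (ii) first, since part (i) will rely on the characterisation it provides, and then to split part (i) into its two implications.

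For part (ii), I would simply unpack the definitions. By definition $x\in\setC_{A,r}$ means $x\in\operatorname{int}(\RR^n_+)$ together with $A\otimes x\le rx$. Writing the $i$-th coordinate of the left-hand side as $(A\otimes x)_i=\max_{j}a_{ij}x_j$, the vector inequality $A\otimes x\le rx$ reads $\max_j a_{ij}x_j\le r x_i$ for every $i$. Since $x\in\operatorname{int}(\RR^n_+)$ guarantees $x_i>0$, I can divide each of these inequalities by $x_i$ and take the maximum over $i$; this turns the system into $\max_{i,j}a_{ij}x_j/x_i\le r$, that is, $e_A(x)\le r$ in the notation of (\ref{eq:errf}). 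Every step is reversible, so this establishes the equivalence. This part is routine and presents no real difficulty.

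For the forward implication of part (i), suppose $\setC_{A,r}\neq\emptyset$ and fix $x\in\setC_{A,r}$. To see $r>0$, I use that $A$ is nonzero: picking $a_{ij}>0$ gives $r x_i\ge (A\otimes x)_i\ge a_{ij}x_j>0$, and since $x_i>0$ this forces $r>0$. To see $r\ge\mu(A)$, I would run the standard cycle-averaging argument: along any cycle $i_1,\ldots,i_k,i_1$ in $D(A)$, the inequality $A\otimes x\le rx$ yields $a_{i_p i_{p+1}}x_{i_{p+1}}\le r x_{i_p}$ for each edge. Multiplying these $k$ inequalities and cancelling the common positive factor $x_{i_1}\cdots x_{i_k}$ leaves the cycle weight bounded by $r^k$; taking $k$-th roots bounds the cycle geometric mean by $r$, and maximising over all cycles gives $\mu(A)\le r$.

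The converse implication is the main point of the proposition and where the work lies. Assume $r>0$ and $r\ge\mu(A)$; I must exhibit a positive vector $x$ with $A\otimes x\le rx$. I would normalise by setting $B=A/r$, so that $\mu(B)=\mu(A)/r\le 1$ and the Kleene star $B^*=I\oplus B\oplus\cdots\oplus B_{\otimes}^{n-1}$ is finite. The candidate is $x:=$ the max-algebraic sum of the columns of $B^*$, which is positive by the fact recorded in Section~\ref{sec:prelim}. To verify $B\otimes x\le x$ it suffices to check $B\otimes B^*\le B^*$ at the matrix level and then apply it columnwise; this in turn reduces to $B_{\otimes}^n\le B^*$, which holds because any walk of length $n$ contains a cycle that, having geometric mean at most $1$, can be deleted without decreasing the weight, eventually reducing the walk to a path already accounted for in $B^*$. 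From $B\otimes x\le x$ I recover $A\otimes x=r(B\otimes x)\le rx$, so $x\in\setC_{A,r}$. The only subtlety I anticipate is the boundary bookkeeping: when $\mu(A)>0$ and $r=\mu(A)$ one has $\mu(B)=1$ and the construction coincides with extracting a positive vector from $V^*(A)$ via the cited identification $V^*(A)=\operatorname{span}_{\oplus}(A^*)$, whereas when $r>\mu(A)$ (in particular whenever $\mu(A)=0$, which can occur for nonzero reducible $A$) one has $\mu(B)<1$ and the same column sum still works. Keeping the argument uniform across $\mu(B)=1$ and $\mu(B)<1$, so that the separate roles of the hypotheses $r>0$ and $r\ge\mu(A)$ are both respected, is the one place where care is needed.
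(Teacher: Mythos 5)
Your proof is correct, but it takes a genuinely more self-contained route than the paper's. For part (i) the paper simply invokes Theorem 1.6.29 of \cite{ButkovicBook} (the identity $\mu(A)=\min\{\lambda \mid A\otimes x\le\lambda x,\ x\in\operatorname{int}(\RR^n_+)\}$, stated there for $\mu(A)>0$) and then disposes of the degenerate case $r=\mu(A)=0$ by observing that $A\otimes x=0$ allows $x_i>0$ only where the $i$th column of $A$ vanishes, so no positive solution exists unless $A=0$. You instead reprove both halves of that variational identity from scratch: the cycle-product argument gives necessity of $r\ge\mu(A)$ (and your separate use of $A\neq 0$ to force $r>0$ is exactly the right place for that hypothesis), while sufficiency is obtained by the explicit construction of a positive subeigenvector as the max-algebraic column sum of the Kleene star of $B=A/r$, justified via $B_{\otimes}^{n}\le B^*$ by cycle deletion. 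Both arguments are sound; yours has the mild advantage of treating the case $\mu(A)=0<r$ uniformly with the rest (the paper's citation is stated only for $\mu(A)>0$, leaving that easy reducible case implicit) and of making visible exactly where each hypothesis enters, at the cost of redoing material the paper reasonably outsources to \cite{ButkovicBook, ButSchn}. Part (ii) is handled the same way in both: it is immediate from the definitions \eqref{eq:errf} and \eqref{eq:CAr}.
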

\begin{proof}
To prove(i), exploit~\cite{ButkovicBook}~Theorem~1.6.29 stating that
\begin{equation}
\label{eq:CW}
\mu(A)=\min\{\lambda\mid A\otimes x\leq \lambda x, \ x\in\operatorname{int}(\RR_+^n),\},
\end{equation}
when $\mu(A)>0$ (based on Butkovi\v{c}-Schneider~\cite{ButSchn}~Theorem 2.6).  In the trivial case $r=\mu(A)=0$, we have $V^*(A)=\{x\mid A\otimes x=0\}$, which consists of all vectors $x$ such that
$x_i\neq 0$ if and only if the $i^{\text{th}}$ column of $A$ is zero. In this case $\setC_{A,r}=\setD_{A,r}=\emptyset$, unless
$A=0$ (which we exclude).

The result of (ii) follows from the definitions of~$e_A(x)$ and~$\setC_{A,r}$ given in~\eqref{eq:errf} and~\eqref{eq:CAr}. 
\end{proof}

See also Gaubert~\cite{GaubertThes} Ch. IV Lemma 1.3.8, Krivulin~\cite{Kriv05} Lemma~1,
and Nussbaum~\cite{NusLAA} Theorem 3.1 (in a more general nonlinear context)
for closely related statements. 

\begin{proposition}[cf. \cite{Drie4}, Theorem 1 part 6]
\label{pro:unique} Let $A\nnegm$. Then $x\in \setD_A$ is unique if and only if $A^C$ is irreducible and $N^C(A)=N(A)$.
\end{proposition}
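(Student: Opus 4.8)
The plan is to reduce the whole statement to the combinatorics of the critical graph through the ratio description of the positive subeigenvectors. First I would normalise so that $\mu(A)=1$; this is harmless, since $V^*(A/\mu(A))=V^*(A)$ as recalled in the text, and it merely rescales $A^*$. (I assume $\mu(A)>0$, which always holds for \emph{SR}-matrices; if $\mu(A)=0$ and $A\neq 0$ then $\setD_A=\emptyset$ by Proposition~\ref{pro:CArprop}(i) and $N^C(A)=\emptyset\neq N(A)$, so both sides fail.) With $\mu(A)=1$ the set $\setD_A$ is nonempty, because the max-algebraic sum $s=\bigoplus_k A^*_{\cdot k}$ of the columns of $A^*$ is positive and lies in $V^*(A)$. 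Since $A^*_{ik}$ is the heaviest weight of a path from $i$ to $k$, iterating the inequalities $x_i\ge a_{ik}x_k$ along paths shows that a positive vector $x$ satisfies $A\otimes x\le x$ if and only if $x_i\ge A^*_{ik}x_k$ for all $i,k$, i.e. $A^*_{ik}\le x_i/x_k\le 1/A^*_{ki}$ for all $i\neq k$. Hence $\setD_A$ is a single point exactly when every ratio $x_i/x_k$ is forced to one value, which happens if and only if $A^*_{ik}A^*_{ki}=1$ for all $i\neq k$.

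The next step is to translate $A^*_{ik}A^*_{ki}=1$ into graph-theoretic terms. As $A^*_{ik}A^*_{ki}$ is the heaviest weight of a closed walk through both $i$ and $k$, and $\mu(A)=1$ forces every cycle, hence every closed walk, to have weight at most $1$, we always have $A^*_{ik}A^*_{ki}\le 1$. Decomposing such a closed walk into cycles, equality forces each cycle used to be critical, so $i$ and $k$ are joined in both directions by critical edges and therefore lie in a common strongly connected component of $D^C(A)$ (in particular both are critical); conversely, the standard fact that every cycle of the critical graph has weight one yields, for $i,k$ in the same component, a weight-$1$ closed walk through both, so $A^*_{ik}A^*_{ki}=1$. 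Imposing this for every pair $i\neq k$ says precisely that all nodes are critical and that the critical graph is strongly connected, i.e. $N^C(A)=N(A)$ and $A^C$ is irreducible, which is the right-hand side of the equivalence.

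The main obstacle is the converse half of the first step: showing that if some ratio is \emph{not} pinned, say $A^*_{ik}A^*_{ki}<1$, then $\setD_A$ genuinely contains more than one point. The subtlety is that the natural witnesses, the columns $A^*_{\cdot i}$ and $A^*_{\cdot k}$, need not be positive, so they cannot be normalised as they stand. To get around this I would perturb the positive vector $s$: because $V^*(A)$ is a max-cone, the vectors $y=s\oplus\lambda A^*_{\cdot i}$ and $z=s\oplus\mu A^*_{\cdot k}$ are positive subeigenvectors for all $\lambda,\mu\ge 0$, and a direct computation (using $A^*_{ii}=A^*_{kk}=1$) shows that their $(i,k)$-ratios satisfy $y_i/y_k\to 1/A^*_{ki}$ and $z_i/z_k\to A^*_{ik}$ as $\lambda,\mu\to\infty$. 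Since $A^*_{ik}<1/A^*_{ki}$, for large $\lambda,\mu$ the vectors $y$ and $z$ have distinct $(i,k)$-ratios, hence are non-proportional, and after normalising $x_1=1$ they give two different points of $\setD_A$. Combining this with the forward implication of the first step and the combinatorial dictionary of the second step gives the stated equivalence.
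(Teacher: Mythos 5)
Your proof is correct, and it takes a genuinely different route from the paper's. The paper's argument leans on the structure theorem for the subeigencone (Theorem 2.8 of \cite{SergeiVS}, recalled as \eqref{eq:v*gen}): $V^*(A)$ is generated by the columns $g^i$ of $A^*$ indexed by one representative of each irreducible block of $A^C$ together with all non-critical indices, and these generators are pairwise non-proportional; uniqueness is then read off directly (``if'': one generator, one ray; ``only if'': two non-proportional generators, made positive by adding $\epsilon z$ with $z$ the positive column sum). You instead describe $\setC_A$ by the two-sided ratio bounds $A^*_{ik}\le x_i/x_k\le 1/A^*_{ki}$, reduce uniqueness to the scalar condition $A^*_{ik}A^*_{ki}=1$ for all $i\ne k$, and translate that condition into ``$i$ and $k$ lie in a common strongly connected component of $D^C(A)$'' via cycle decompositions of closed walks. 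Your witness for non-uniqueness, $s\oplus\lambda A^*_{\cdot i}$ versus $s\oplus\mu A^*_{\cdot k}$ with $\lambda,\mu$ large, is the same perturbation trick as the paper's, taken in the opposite limit; it is also the genuinely load-bearing step, since the coupled ratio constraints do not by themselves show that a slack interval $[A^*_{ik},1/A^*_{ki}]$ can actually be exploited, and you handle this correctly (including, implicitly, the degenerate cases $A^*_{ik}=0$ or $A^*_{ki}=0$, where the limiting ratios are $0$ or $+\infty$ and are still distinct). What your approach buys is self-containedness --- you need only the path interpretation of $A^*$ and the fact that every cycle of the critical graph is critical, not the cited structure theorem or strong linear independence --- plus the quantitative criterion $A^*_{ik}A^*_{ki}=1$, which connects naturally to the diagonal-similarity and visualisation viewpoint used elsewhere in the paper. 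What the paper's route buys is brevity, at the price of importing heavier machinery from \cite{SergeiVS}.
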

\begin{proof}
Proposition~\ref{pro:CArprop}~(i) implies that we have $\setD_A=\setC_A=\emptyset$ when $\mu(A)=0$,
so we can assume $\mu(A)>0$ and, further, $\mu(A)=1$. According to~\cite{SergeiVS}~Theorem~2.8,
\begin{equation}
\label{eq:v*gen}
V^*(A)=V(A^*)=\left\{\bigoplus_{i\in M(A)}\lambda_i g^i\oplus\bigoplus_{i\notin N^C(A)} \lambda_j g^j\mid\lambda_i,\lambda_j\in\RR_+\right\}.
\end{equation}
Here $g^i$ is the $i^{\text{th}}$ column of $A^*$.  The subset
$M(A)\subset \{1,\ldots,n\}$ is such that for each (maximal) irreducible submatrix of $A^C$ there is a unique
index of that submatrix in $M(A)$. 
By the same theorem of~\cite{SergeiVS}, based on the classical results in~\cite{SandL, CGreen},
columns of $A^*$ with indices
in the same irreducible submatrix of $A^C$ are proportional to each other, and there is no proportionality
between different $g^i$ appearing in~\eqref{eq:v*gen} (moreover, these $g^i$ are strongly linearly 
independent~\cite{ButkovicBook}).

`If'': As $A^C$ is irreducible and all nodes are critical, all columns of $A^*$ are proportional to
each other, and~\eqref{eq:v*gen} shows that $V^*(A)$ is just one ray. $A\geq A^C$ is 
irreducible as well, so $V^*(A)=\setC_A$ and $\setD_A$ is the unique vector on that ray with $x_1=1$.
   
``Only if'':  Let $A^C$ be reducible, or let $N^C(A)\neq N(A)$.  The vector $z=\bigoplus\limits_{i=1}^n g^i$
is positive. Consider $g^i\oplus\epsilon z$
for all $i$, taking small enough $\epsilon$ so that all entries of $\epsilon z$ are less than
any nonzero entry of $A^*$. In this case the positive entries of $g^i$ do not change, and if
$g^i$ and $g^j$ are not proportional then neither are $g^i\oplus\epsilon z$ and $g^j\oplus\epsilon z$.
After suitably normalising these vectors, we obtain two different vectors in
$\setD_A$. 
\end{proof}

Next, let $\Psi \subset \RR_+^{n\times n}$ be a finite set of nonnegative matrices given by
\begin{equation}
\label{eq:Psi}
\Psi=\{A_1, A_2, ..., A_m\},\quad\exists i\colon A_i\neq 0
\end{equation}

Given $\Psi$, 
let $\Psim$ denote the set of all products of matrices from $\Psi$ of length $p\ge 1$.  Formally, $\Psim= \{ A_{j_1} \otimes \cdots \otimes A_{j_p}: 1 \leq j_k \leq m\mbox{ for } 1 \leq k \leq p\}.$ Using this, the max version of the generalised spectral radius \cite{GenSpeRad1, GenSpeRad2} is defined by
\begin{equation}
\label{eq:mupsi}
\hmu(\Psi)=\limsup_{p\rightarrow\infty}(\max_{\psi\in\Psim} \mu(\psi))^{\frac{1}{p}}.
\end{equation}
Before stating the next theorem, let $S$ be the matrix given by
\begin{equation}
\label{eq:Smat}
S=\bigoplus\limits_{A \in \Psi} A = A_1\oplus A_2\oplus ...\oplus A_m.
\end{equation}

Note that $S>0$ if at least one $A_i>0$. Moreover, $\mu(S)>0$ if at least one
$\mu(A_i)>0$ and $S$ is irreducible if at least one $A_i$ is irreducible: these represent the main cases in which 
we are interested.

\begin{theorem}
\label{thm:muMSPsiS}
Let $\Psi$ be given by (\ref{eq:Psi}) and $S$ be given by (\ref{eq:Smat}).
Then, $\hmu(\Psi)=\mu(S)$. (Gaubert~\cite{GaubertPerf}, Benek~Gursoy and Mason~\cite{Paper2})
\end{theorem} 

Inspired by the approach to the single criterion AHP adopted in \cite{Drie4}, we associate a set $\setC_{\Psi, r}$ with the set $\Psi$ of nonnegative matrices and show how the geometric properties of $\setC_{A, r}$ discussed in \cite{Drie4} extend to this new setting.

Define
\begin{equation}
\label{eq:CPr}
\setC_{\Psi,r}=\{x \in \textrm{int}(\RR^n_+)\mid e_{A_i}(x)\le r {\text{ for all } A_i\in\Psi}\} = \bigcap\limits_{i=1}^{m}\setC_{A_i, r}.
\end{equation}
We also consider the set of normalised vectors:
\begin{equation}
\label{eq:DPr}
\setD_{\Psi,r}=\{x\in \setC_{\Psi, r}\mid x_1=1\}.
\end{equation}
We will use the notations $\setC_{\Psi}$ for $\setC_{\Psi,\hat\mu(\Psi)}$ and $\setD_{\Psi}$ for $\setD_{\Psi,\hat\mu(\Psi)}$.
The following result shows that the set $\setC_{\Psi, r}$ and the set $\setC_{S, r}$ are equal.  This will allow us to readily extend properties of $\setC_{A, r}$ established in \cite{Drie4} to sets of matrices.

\begin{theorem}
\label{thm:subs}
Consider the set $\Psi\subset\RR_{+}^{n\times n}$ given by (\ref{eq:Psi}),  
and let $S$ be given by (\ref{eq:Smat}).  
Then:
$$\setC_{\Psi,r}=\setC_{S,r}.$$
\end{theorem}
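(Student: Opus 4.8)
The plan is to unwind both sets to the subeigenvector inequalities and then exploit the fact that forming $S$ by taking the entrywise max is exactly what is needed to collapse the system of $m$ inequalities into a single one. First I would note that both $\setC_{\Psi,r}$ and $\setC_{S,r}$ are by definition subsets of $\textrm{int}(\RR^n_+)$, so there is no discrepancy in the underlying domain and it suffices to show that, for a fixed positive $x$, the membership conditions coincide. By the definition in~\eqref{eq:CAr} we have $x\in\setC_{A_i,r}\iff A_i\otimes x\le rx$, and $x\in\setC_{S,r}\iff S\otimes x\le rx$; here the equivalence with the error-function formulation via Proposition~\ref{pro:CArprop}(ii) is available but not strictly needed.

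The key step is the elementary distributivity identity
\begin{equation}
\label{eq:Sdistrib}
S\otimes x=\bigoplus_{i=1}^m (A_i\otimes x)\qquad\text{for every }x\nnegv.
\end{equation}
I would verify~\eqref{eq:Sdistrib} componentwise: for each row index $k$, using $s_{kl}=\bigoplus_{i}(a_i)_{kl}$ together with the associativity and commutativity of $\oplus$,
\begin{equation*}
(S\otimes x)_k=\bigoplus_{l}s_{kl}x_l=\bigoplus_{l}\bigoplus_{i}(a_i)_{kl}x_l=\bigoplus_{i}\bigoplus_{l}(a_i)_{kl}x_l=\bigoplus_{i}(A_i\otimes x)_k,
\end{equation*}
which is precisely the $k$-th component of the right-hand side of~\eqref{eq:Sdistrib}.

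With~\eqref{eq:Sdistrib} in hand the conclusion follows from a short chain of equivalences. For positive $x$ we have $x\in\setC_{S,r}\iff S\otimes x\le rx\iff \bigoplus_{i=1}^m(A_i\otimes x)\le rx$, and since the max-algebraic sum of finitely many nonnegative vectors is dominated by $rx$ if and only if each summand is, this is equivalent to $A_i\otimes x\le rx$ for every $i$, i.e. to $x\in\bigcap_{i=1}^m\setC_{A_i,r}=\setC_{\Psi,r}$. Honestly, there is no serious obstacle here: the entire statement reduces to the entrywise-max distributivity~\eqref{eq:Sdistrib}, and the only point deserving a word of care is the passage from $\bigoplus_i(A_i\otimes x)\le rx$ to the individual inequalities, which is immediate because each $A_i\otimes x$ is below the pointwise maximum. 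I would therefore expect the proof to be a few lines rather than to require any of the structural machinery (critical graphs, Kleene stars) developed earlier in the section.
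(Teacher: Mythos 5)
Your proof is correct and takes essentially the same route as the paper: both arguments reduce membership in either set to the subeigenvector inequalities and use the fact that $S\otimes x\le rx$ holds if and only if $A_i\otimes x\le rx$ for every $i$ (the paper phrases this via $e_{A_i}(x)\le e_S(x)$ and Proposition~\ref{pro:CArprop}(ii), while you verify the distributivity identity componentwise, but the content is identical). No gaps.
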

\begin{proof}
(i): Let $x$ in $\setC_{\Psi, r}$ be given. Then, $e_{A_i}(x)\leq r$ which implies $A_i\otimes x\leq rx$ for each $A_i\in\Psi$ from (ii) in Proposition \ref{pro:CArprop}.  Taking the maximum of both sides from $1$ to $m$, we see that $S\otimes x\leq rx$.  It follows that $x\in \setC_{S,r}$. Thus, $\setC_{\Psi,r}\subset \setC_{S,r}$.

Now choose some $x\in \setC_{S, r}$.  Then $e_{S}(x)\leq r$ from (ii) in Proposition \ref{pro:CArprop}. 
Since $e_{A_i}(x)\leq e_{S}(x)\leq r$ for all $1\leq i\leq m$, we obtain $x\in \setC_{\Psi, r}$. Thus, $\setC_{S,r}\subset \setC_{\Psi,r}$. Hence $\setC_{\Psi,r}=\setC_{S,r}$.
\end{proof}

The following corollary extends Proposition \ref{pro:CArprop} to a set of nonnegative matrices $\Psi$.  Since $\hmu(\Psi)=\mu(S)$ by Theorem~\ref{thm:muMSPsiS}~(ii), the corollary is an immediate consequence of Theorem~\ref{thm:subs}.
\begin{corollary}
\label{cor:CPrprop} Consider the set $\Psi\subset\RR_{+}^{n\times n}$ given by (\ref{eq:Psi}), 
and let $S$ be given by (\ref{eq:Smat}). Then:
\begin{itemize}
\item[{(i)}] $\setC_{\Psi,r}\neq \emptyset \iff r>0$, $r\ge \hmu(\Psi)$;
\item[{(ii)}] $x\in \setC_{\Psi,r}\iff x\in \operatorname{int}(\RR^n_+),$  $e_S(x)\le r$.
\end{itemize}
\end{corollary}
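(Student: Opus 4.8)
The plan is to derive this corollary directly by composing the three ingredients already established for the matrix $S$: the set identity of Theorem~\ref{thm:subs}, the spectral identity of Theorem~\ref{thm:muMSPsiS}, and the single-matrix characterisation of Proposition~\ref{pro:CArprop}. Since the statement only involves $\setC_{\Psi,r}$, $\hmu(\Psi)$, $r$, and $e_S$, and each of these is tied to the corresponding object for $S$, the strategy is to translate everything into statements about $S$ alone and then invoke Proposition~\ref{pro:CArprop}. The first thing I would check is that Proposition~\ref{pro:CArprop} is applicable, i.e.\ that $S$ is nonzero: this is guaranteed because $\Psi$ contains at least one nonzero matrix by~\eqref{eq:Psi}, so by the remark following~\eqref{eq:Smat} the matrix $S=\bigoplus_{A\in\Psi}A$ is nonzero as well.

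For part (i), I would begin from Theorem~\ref{thm:subs}, which gives $\setC_{\Psi,r}=\setC_{S,r}$, so that $\setC_{\Psi,r}\neq\emptyset$ if and only if $\setC_{S,r}\neq\emptyset$. Applying Proposition~\ref{pro:CArprop}~(i) to the nonzero matrix $S$ then yields $\setC_{S,r}\neq\emptyset \iff r>0,\ r\ge\mu(S)$. The final step is to rewrite the threshold $\mu(S)$ as $\hmu(\Psi)$ using Theorem~\ref{thm:muMSPsiS}, which asserts exactly $\hmu(\Psi)=\mu(S)$; substituting this gives the claimed equivalence $\setC_{\Psi,r}\neq\emptyset \iff r>0,\ r\ge\hmu(\Psi)$.

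For part (ii), the argument is the same in spirit but even shorter, as no spectral identity is needed. By Theorem~\ref{thm:subs} again, $x\in\setC_{\Psi,r}\iff x\in\setC_{S,r}$, and Proposition~\ref{pro:CArprop}~(ii) applied to $S$ states that $x\in\setC_{S,r}\iff x\in\operatorname{int}(\RR^n_+)$ and $e_S(x)\le r$. Chaining these two equivalences delivers the desired characterisation directly.

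Honestly, there is no genuine obstacle here: the corollary is a bookkeeping consequence of results already proved, which is why the text flags it as immediate. The only point requiring a word of care is the applicability of Proposition~\ref{pro:CArprop}, whose hypothesis is that the matrix be nonzero; this is precisely why I would record at the outset that $S\neq 0$ follows from the standing assumption in~\eqref{eq:Psi} that some $A_i$ is nonzero. Everything else is substitution.
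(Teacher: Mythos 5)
Your proposal is correct and follows exactly the route the paper takes: it declares the corollary an immediate consequence of Theorem~\ref{thm:subs} combined with $\hmu(\Psi)=\mu(S)$ from Theorem~\ref{thm:muMSPsiS}, with Proposition~\ref{pro:CArprop} applied to $S$. Your extra remark that $S\neq 0$ (so Proposition~\ref{pro:CArprop} applies) is a sensible bit of added care but does not change the argument.
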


Theorem \ref{thm:subs} establishes that $\setC_{\Psi, r} = \setC_{S, r}$.  It is immediate that we also have $\setD_{\Psi,r}=\setD_{S,r}, \setC_{\Psi}=\setC_{S}$ and $\setD_{\Psi}=\setD_{S}$.  Therefore, studying these sets for a collection of 
nonnegative matrices reduces to studying the sets associated with the single matrix $S$.  This fact means that the properties of $\setC_{A, r}$ discussed in Theorem 1 of \cite{Drie4} can be directly extended to $\setC_{\Psi, r}$.  We state some of these in the following theorem, noting that property (i) does not require irreducibility. 

\begin{theorem}[cf. \cite{Drie4} Theorem 1]
\label{thm:geop}
Let $\Psi\subset\RR_+^{n\times n}$ be given in (\ref{eq:Psi}), and let $S$ be given by (\ref{eq:Smat}).  Then:
\begin{itemize}
\item[(i)] {$\setC_{\Psi, r}$ and $\setD_{\Psi,r}$ are convex and max-convex;}
\item[(ii)] {$\setD_{\Psi}$ consists of only one vector if and only if $S^C$ is irreducible and $N^C(S)=N(S)$.}
\item[(iii)] {If all matrices in $\Psi$ are irreducible then $\setD_{\Psi, r}$ is compact.}
\end{itemize}
\end{theorem}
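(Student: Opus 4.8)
The plan is to reduce every assertion to the single-matrix case already handled in Theorem~\ref{thm:subs} and Proposition~\ref{pro:unique}. By Theorem~\ref{thm:subs} and the remarks immediately following it, we have $\setC_{\Psi,r}=\setC_{S,r}$, $\setD_{\Psi,r}=\setD_{S,r}$, $\setC_{\Psi}=\setC_S$ and $\setD_{\Psi}=\setD_S$. Hence it suffices to prove each of (i)--(iii) for the single matrix $S$, after which the statements for $\Psi$ follow at once. This reduction is the organising idea of the whole argument, and it renders (ii) essentially free.

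For (i), I would first rewrite $\setC_{S,r}=\{x\in\operatorname{int}(\RR^n_+)\mid s_{ij}x_j\le r x_i \text{ for all } i,j\}$, which follows from Proposition~\ref{pro:CArprop}(ii) by unfolding $S\otimes x\le rx$ coordinatewise. Each constraint $s_{ij}x_j\le r x_i$ is a linear inequality, so $\setC_{S,r}$ is an intersection of finitely many half-spaces with the (convex) open orthant, giving ordinary convexity; intersecting further with the hyperplane $\{x_1=1\}$ yields convexity of $\setD_{S,r}$. For max-convexity I would take $x,y\in\setC_{S,r}$ and scalars $\lambda,\mu\ge 0$ with $\max(\lambda,\mu)=1$, set $z=\lambda\otimes x\oplus\mu\otimes y$, and use the elementary identity $(S\otimes z)_i=\max\bigl(\lambda (S\otimes x)_i,\mu(S\otimes y)_i\bigr)$ together with $S\otimes x\le rx$ and $S\otimes y\le ry$ to conclude $S\otimes z\le r z$; positivity of $z$ is clear, and $z_1=\max(\lambda x_1,\mu y_1)=1$ handles the normalised set.

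Part (ii) is then immediate: applying Proposition~\ref{pro:unique} to the matrix $A=S$ shows that $\setD_S$ is a singleton if and only if $S^C$ is irreducible and $N^C(S)=N(S)$, and $\setD_{\Psi}=\setD_S$.

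For (iii), I would first note that if every $A_i$ is irreducible then $S=\bigoplus_i A_i$ is irreducible, as observed before Theorem~\ref{thm:muMSPsiS}. Closedness of $\setD_{S,r}$ is clear once its coordinates are shown to be bounded away from $0$, so the crux is a two-sided bound on each coordinate. Using irreducibility, for each node $j$ there is a directed path from $1$ to $j$ and one from $j$ to $1$ in $D(S)$; chaining the edge inequalities $s_{ik}x_k\le r x_i$ along these paths bounds $x_j$ above and below by positive constants depending only on $S$ and $r$ (recalling $x_1=1$). This confines $\setD_{S,r}$ to a compact box inside $\operatorname{int}(\RR^n_+)$, and together with closedness gives compactness. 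I expect this last step to be the main obstacle: the bounds genuinely require irreducibility, since for a reducible $S$ the coordinates may approach $0$ or diverge, so the path-chaining argument is precisely where the hypothesis is consumed and where care is needed.
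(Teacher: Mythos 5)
Your proposal is correct and follows essentially the same route as the paper: reduce everything to the single matrix $S$ via Theorem~\ref{thm:subs}, prove (i) by writing $\setC_{S,r}$ as an intersection of the elementary convex and max-convex sets $\{x\mid s_{ij}x_j\le rx_i\}$, and obtain (ii) from Proposition~\ref{pro:unique} applied to $S$. The only difference is in (iii), where the paper simply cites \cite{Drie4} Theorem~1 applied to $S$, while you supply the underlying path-chaining bound on the coordinates directly; your argument there is sound.
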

\begin{proof}
(i): (see \cite{SergeiVS}~Proposition~3.1) $\setC_{\Psi, r}$ is convex and max-convex, since
\begin{equation}
\setC_{\Psi,r}=\bigcap\limits_{i,j}\{x\in\operatorname{int}(\RR_+^n)\mid s_{ij} x_j\leq rx_i\},
\end{equation}
where the sets whose intersection is taken are convex and max-convex. $\setD_{\Psi,r}$
inherits these properties as a coordinate section of $\setC_{\Psi, r}$.

(ii) follows from Proposition~\ref{pro:unique} applied to $S$; 
(iii) follows from~\cite{Drie4}~Theorem~1 applied to $S$. 
\end{proof}

We remark that sets that are both convex and max-convex have appeared under various names like
Kleene cones \cite{SergeiVS}, polytropes \cite{JoswigKulas}, or zones \cite{Mine}.

\section{Globally Optimal Solutions}
\label{sec:commsubevec}
The application of the max algebra to the AHP is motivated in \cite{Drie3, Drie4} by the following considerations.  First, it is observed that, for an {\it{SR}}-matrix $A$, vectors in the set $\setC_A$ minimise the function (\ref{eq:errf}) and hence the relative error.  Based on this observation, these vectors are used to construct transitive matrices to obtain an overall ranking of the alternatives in the decision process.  In light of the properties of $\setC_A$, this is justified by the fact that the transitive matrices constructed in this way are \textit{closest} to the original {\it{SR}}-matrix $A$ in the sense of the relative error.

Thus, the approach to construct a ranking vector for a single {\it{SR}}-matrix taken in \cite{Drie3, Drie4} amounts to solving the following optimisation problem.
\begin{equation}
\label{eq:OPT1}
\min\limits_{x \in \textrm{int}(\RR^n_+)} \{e_A(x)\}.
\end{equation}
In this and the following section, we are concerned with extending the above approach to the general AHP with $n$ alternatives and $m$ criteria.

Formally, we are given $m$ {\it{SR}}-matrices; one for each criterion.  Let $\Psi$ in (\ref{eq:Psi}) denote the set of these matrices.  For each $A_i\in\Psi$, there is an error function $e_{A_i}: \textrm{int}(\RR_{+}^{n})\rightarrow \RR_+$  defined as in (\ref{eq:errf}).  In contrast to the approach taken in the classical AHP, we view the construction of a ranking vector for the $m$ criteria as a multi-objective optimisation problem for the error functions $e_{A_i}$, $1 \leq i \leq m$.

To begin with, we seek a vector that simultaneously minimizes all of the functions $e_{A_i}$.  Such a vector is said to be a \emph{globally optimal solution} for the multi-objective optimisation problem.

Note that for each $A_i\in \Psi$, the set of vectors that minimise $e_{A_i}: \textrm{int}(\RR_+^{n})\rightarrow \RR_+$ is precisely $\setC_{A_i}$ \cite{Drie4}: formally,
\begin{equation}
\label{eq:murelmvec}
\setC_{A_i}=\{x \in \textrm{int}(\RR^n_+) \mid e_{A_i}(x) = \min\limits_{w \in \textrm{int}(\RR^n_+)}e_{A_i}(w)\}  ,\quad i=1, 2, ..., m.
\end{equation}
Hence, the problem of finding a vector $x \in \textrm{int}(\RR^n_+)$ that simultaneously minimises all the error functions $e_{A_i}$ amounts to determining when
$$\bigcap\limits_{i=1}^m \setC_{A_i} \neq \emptyset.$$
Equivalently, $x$ simultaneously minimises all the error functions if and only if it is a common subeigenvector of $A_i$ for all $i\in\{1, 2, ..., m\}$.  The remainder of this section is divided into two parts: we first consider the existence of common subeigenvectors for arbitrary nonnegative matrices in the next subsection; we then specialise to sets of {\it SR}-matrices and globally optimal solutions.

\subsection{Common Max-algebraic Subeigenvectors of Nonnegative Matrices}
First of all, we consider the general problem of finding a common subeigenvector for a set of nonnegative 
matrices (not necessarily {\it{SR}}-\-mat\-ri\-ces).  Our results are clearly related to the work in \cite{SergeiComm} concerning the intersection of eigencones of commuting matrices over the max and nonnegative algebra.

In the next result, we adopt the notation $\hat A_i=\frac{A_i}{\mu(A_i)}$ for $1 \leq i \leq m$ and, in an abuse of notation, $\hat S=\bigoplus\limits_{i=1}^{m}\hat A_i.$
\begin{theorem}
\label{thm:comxsh}
Consider the set $\Psi\subset \RR_+^{n\times n}$ in (\ref{eq:Psi}).  
The following
assertions are equivalent.
\begin{itemize}
\item[{(i)}] $\mu(\hat S)=1$;
\item[{(ii)}] There exists some $x\in \operatorname{int}(\RR^n_+)$ with $A_i\otimes x\leq \mu(A_i)x$ for all $A_i\in\Psi$;
\item[{(iii)}] $\mu(A_{j_1}\otimes\cdots\otimes A_{j_p}) \leq \mu(A_{j_1})\cdots\mu(A_{j_p})$ where $1 \leq j_k \leq m$ for $1 \leq k \leq p$. (We say that $\mu$ is submultiplicative on $\Psi$).
\end{itemize}
\end{theorem}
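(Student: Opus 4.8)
The plan is to prove the chain of equivalences $(i)\Leftrightarrow(ii)\Leftrightarrow(iii)$, routing everything through the normalised matrices $\hat A_i = A_i/\mu(A_i)$, each of which satisfies $\mu(\hat A_i)=1$. The key observation that makes this tractable is that condition (ii) is exactly the statement that the sets $\setC_{\hat A_i}$ have a common point, i.e. $\bigcap_i \setC_{\hat A_i}\neq\emptyset$; and since normalising $A_i$ by $\mu(A_i)$ leaves $V^*$ unchanged (as noted in the preliminaries), the inequality $A_i\otimes x\le\mu(A_i)x$ is equivalent to $\hat A_i\otimes x\le x$. So (ii) says precisely that $\hat S\otimes x = \bigoplus_i \hat A_i\otimes x\le x$ for some positive $x$.

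For $(i)\Leftrightarrow(ii)$ I would apply Proposition~\ref{pro:CArprop}(i) (or Corollary~\ref{cor:CPrprop}(i)) to the matrix $\hat S$. By that result, $\setC_{\hat S, 1}\neq\emptyset$ if and only if $1>0$ and $1\ge\mu(\hat S)$. Since each $\hat A_i\le\hat S$ gives $\mu(\hat A_i)=1\le\mu(\hat S)$, we always have $\mu(\hat S)\ge 1$, so the condition $1\ge\mu(\hat S)$ collapses to $\mu(\hat S)=1$. Meanwhile $\setC_{\hat S,1}\neq\emptyset$ means there is a positive $x$ with $\hat S\otimes x\le x$, which by Theorem~\ref{thm:subs} applied to the family $\{\hat A_i\}$ (with $r=1$) is the same as $x\in\bigcap_i\setC_{\hat A_i,1}$, i.e. condition (ii). This gives both directions of $(i)\Leftrightarrow(ii)$ cleanly.

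For $(iii)$, the natural bridge is the generalised spectral radius. Dividing the submultiplicativity inequality through by $\mu(A_{j_1})\cdots\mu(A_{j_p})$ turns it into the statement $\mu(\hat A_{j_1}\otimes\cdots\otimes\hat A_{j_p})\le 1$ for every product, using the homogeneity $\mu(\alpha B)=\alpha\mu(B)$ together with $\mu(\hat A_{j_1}\otimes\cdots\otimes\hat A_{j_p}) = \mu(A_{j_1}\otimes\cdots\otimes A_{j_p})/(\mu(A_{j_1})\cdots\mu(A_{j_p}))$. Thus (iii) is equivalent to $\max_{\psi\in\hat\Psi_\otimes^p}\mu(\psi)\le 1$ for all $p$, which by the definition~\eqref{eq:mupsi} of $\hmu$ is equivalent to $\hmu(\hat\Psi)\le 1$. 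Now Theorem~\ref{thm:muMSPsiS} gives $\hmu(\hat\Psi)=\mu(\hat S)$, so (iii) becomes $\mu(\hat S)\le 1$, which (again using $\mu(\hat S)\ge 1$ from $\hat A_i\le\hat S$) is exactly $\mu(\hat S)=1$, i.e. condition (i).

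I expect the main obstacle to be the careful handling of the homogeneity/normalisation bookkeeping in the $(iii)$ direction, specifically verifying that $\mu$ is multiplicative under scalar multiples and that the per-product inequalities for all $p\ge 1$ are genuinely equivalent to $\hmu(\hat\Psi)\le 1$ rather than merely implied by it. One must check that the $\limsup$ in~\eqref{eq:mupsi} being $\le 1$ forces every individual $\mu(\psi)\le 1$; this uses that $\mu(\psi_\otimes^k)=\mu(\psi)^k$ for a single product $\psi$, so if some $\mu(\psi)>1$ its powers would drive the $\limsup$ above $1$, contradicting $\hmu(\hat\Psi)\le 1$. Apart from this, the only subtlety is the degenerate case where some $\mu(A_i)=0$; but since $A_i$ are \emph{SR}-matrices (or at least we restrict to the case $\mu(A_i)>0$ so that $\hat A_i$ is well defined), this does not arise in the setting of interest.
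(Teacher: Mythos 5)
Your proposal is correct, and it reorganises the argument compared with the paper. The paper proves the cycle (i)$\Rightarrow$(ii)$\Rightarrow$(iii)$\Rightarrow$(i): the step (i)$\Rightarrow$(ii) is the trivial restriction of a subeigenvector of $\hat S$ to each $\hat A_i$; the step (ii)$\Rightarrow$(iii) is a direct computation, chaining the inequalities $A_{j_k}\otimes x\le\mu(A_{j_k})x$ through the product $\psi=A_{j_1}\otimes\cdots\otimes A_{j_p}$ to exhibit $x\in\setC_{\psi,r}$ with $r=\mu(A_{j_1})\cdots\mu(A_{j_p})$ and then invoking Proposition~\ref{pro:CArprop}(i) (i.e.\ the minimality in~\eqref{eq:CW}) to conclude $\mu(\psi)\le r$; and (iii)$\Rightarrow$(i) uses only the easy direction of the passage from per-product bounds to $\hmu(\hat\Psi)\le 1$, followed by Theorem~\ref{thm:muMSPsiS}. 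You instead establish (i)$\Leftrightarrow$(ii) and (i)$\Leftrightarrow$(iii) as two self-contained biconditionals. Your (i)$\Leftrightarrow$(ii) via $\setC_{\hat S,1}$ and Theorem~\ref{thm:subs} is essentially the paper's argument packaged both ways at once. The genuine divergence is in (i)$\Rightarrow$(iii): where the paper routes this through the subeigenvector of (ii), you derive it from $\hmu(\hat\Psi)=\mu(\hat S)\le 1$ together with the power argument ($\mu(\psi_{\otimes}^k)=\mu(\psi)^k$, so a single product with $\mu(\psi)>1$ would push the $\limsup$ in~\eqref{eq:mupsi} above $1$). That converse step is correct but is an extra lemma the paper never needs; in exchange you make transparent that condition (iii) is literally the unwinding of $\hmu(\hat\Psi)\le 1$, whereas the paper's (ii)$\Rightarrow$(iii) is more elementary and uses Theorem~\ref{thm:muMSPsiS} in only one direction. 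Your closing remarks on the degenerate case $\mu(A_i)=0$ and on the necessity of the power argument correctly identify the only places where care is needed.
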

\begin{proof}
(i)$\Rightarrow$(ii): First, assume that $\mu(\hat S)=1$. Then, there exists $x\in \textrm{int}(\RR^n_+)$ such that $\hat S\otimes x\leq x$. 
Thus, $\hat A_i\otimes x\leq \hat S\otimes x\leq x$ for all $A_i\in\Psi$. Hence, $A_i\otimes x\leq \mu(A_i)x$ for all $i$.

(ii)$\Rightarrow$(iii): Suppose that there exists some $x\in \textrm{int}(\RR^n_+)$ with $A_i\otimes x\leq \mu(A_i)x$ for all $A_i\in\Psi$. Pick some $\psi\in\Psim$ such that $\psi=A_{j_1} \otimes A_{j_2}\otimes \cdots \otimes A_{j_p}$ where $1 \leq j_k \leq m\mbox{ for } 1 \leq k \leq p$. Then,
\begin{align*}
\psi\otimes x & = A_{j_1}\otimes A_{j_2}\otimes\cdots\otimes A_{j_{p}}\otimes x\displaybreak[0]\\\nonumber
& \leq \mu(A_{j_p})A_{j_1}\otimes A_{j_2}\otimes\cdots\otimes A_{j_{p-1}}\otimes x\displaybreak[0]\\\nonumber
&\vdots\displaybreak[0]\\\nonumber
& \leq \mu(A_{j_p})\mu(A_{j_{p-1}})\cdots \mu(A_{j_2}) A_{j_{1}}\otimes x\displaybreak[0]\\\nonumber
& \leq \mu(A_{j_p})\mu(A_{j_{p-1}}) \cdots \mu(A_{j_2}) \mu(A_{j_{1}}) x.\displaybreak[0]\\\nonumber
\end{align*}
Writing $r=\mu(A_{j_1})\mu(A_{j_{2}}) \cdots \mu(A_{j_{p}})$, we see that $x\in \setC_{\psi, r}$ from the definition (\ref{eq:CAr}). Hence, $\setC_{\psi, r}\neq \emptyset$.  Point (i) in Proposition \ref{pro:CArprop} implies $r\geq \mu(\psi)$. Thus, $\mu(A_{j_{1}})\mu(A_{j_2})\cdots \mu(A_{j_p})\geq \mu(A_{j_1}\otimes A_{j_2}\otimes\cdots\otimes A_{j_p})$. Note that we
essentially used~\eqref{eq:CW}.

(iii)$\Rightarrow$(i): Consider the set of normalised matrices $$\hat \Psi=\{\hat A_1, \hat A_2, ..., \hat A_m\}$$ where $\mu(\hat A_i)=1$ for all $i\in\{1, 2, ..., m\}$. Pick some $\psi\in\hPsim$.  As $\mu$ is submultiplicative on $\Psi$, it is also submultiplicative on $\hat \Psi$.  Thus, we have $\mu(\psi)\leq 1.$   As this is true for any $\psi \in \hPsim$, it follows that $$\max\limits_{\psi\in\hPsim}\mu(\psi)\leq 1.$$
Taking the $p^{\text{th}}$ root and $\lim\sup\limits_{p\rightarrow\infty}$ of both sides, we see that
$$\hmu(\hat \Psi)\leq 1.$$
Theorem \ref{thm:muMSPsiS} (ii) then implies that $\mu(\hat S)\leq 1$.  Furthermore, since $\mu(\hat S)\geq \mu(\hat A_i)=1$ for all $i\in\{1, 2, ..., m\}$ we obtain $\mu(\hat S)=1$.
\end{proof}
Note that the equivalence (i)$\Leftrightarrow$(ii) can be regarded
as a special case of Hershkowitz-Schneider~\cite{HS-03} Theorem 2.5, see also \cite{ButSchn}
Theorem 3.5 for an extension. In these works, the problem of
simultaneous nonnegative matrix scaling is considered; this amounts to finding
a diagonal matrix $X$ such that $XA_kX^{-1}\leq B_k$ for
$k=1, \ldots, m$.  For our case, take $B_k=\mathbf{1}_{n\times n}$ (the all-ones matrix) and
impose $\mu(A_k)=1$. However, condition (iii) does not appear
in \cite{HS-03} or \cite{ButSchn}.

In the terminology of Butkovi\v{c}~et~al.~\cite{ButkovicBook, SergeiVS}, there exists a {\bf
simultaneous visualisation} of all of the matrices in $\Psi$, meaning that
$X^{-1}A_iX\leq\mu(A_i)\cdot\mathbf{1}_{n\times n}$ for all $i$, and
in particular $(X^{-1}A_iX)_{kl}=\mu(A_i)$ for all $i$ and $(k,l)\in
E^C(A_i)$.
The following result for general nonnegative matrices will be useful in the next subsection to clarify the relationship 
between commutativity and the existence of globally optimal solutions for $3 \times 3$ {\it SR}-matrices.
\begin{proposition}
\label{p:commoncrit}
Let $A, B\nnegm$ have $\mu(A)=\mu(B)=1$. If $\mu(A\oplus B)=1$, then
\begin{itemize}
\item[(i)] $a_{ij}=b_{ij}$ for all edges $(i, j)\in E^C(A)\cap E^C(B)$;
\item[(ii)] $a_{ij}b_{ji}=1$ for $(i,j)\in E^C(A)$ and $(j,i)\in E^C(B)$.
\end{itemize}
\end{proposition}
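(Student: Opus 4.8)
The plan is to exploit the equivalence (i)$\Leftrightarrow$(ii) of Theorem~\ref{thm:comxsh} to produce a common positive subeigenvector, and then to read off the required equalities from its action on the critical edges. First I would observe that since $\mu(A)=\mu(B)=1$ we have $\hat A=A$, $\hat B=B$, and hence $\hat S=A\oplus B=S$; thus the hypothesis $\mu(A\oplus B)=1$ is precisely condition~(i) of Theorem~\ref{thm:comxsh}, namely $\mu(\hat S)=1$, applied to the set $\Psi=\{A,B\}$ (which is admissible since $\mu(A)=1>0$ forces $A\neq 0$). Invoking the implication (i)$\Rightarrow$(ii), there is some $x\in\operatorname{int}(\RR^n_+)$ with $A\otimes x\le x$ and $B\otimes x\le x$. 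Writing $X=\diag(x)$, these inequalities say exactly that $(X^{-1}AX)_{ij}=a_{ij}x_j/x_i\le 1$ and $(X^{-1}BX)_{ij}=b_{ij}x_j/x_i\le 1$ for all $i,j$.

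The key step is to upgrade these inequalities to equalities along critical edges. I would take any critical cycle $i_1,\ldots,i_k,i_1$ of $A$; since $\mu(A)=1$, its weight satisfies $a_{i_1i_2}\cdots a_{i_ki_1}=1$. The product of the diagonally scaled entries around the cycle telescopes in the $x$-factors, giving $\prod_{p=1}^{k}\bigl(a_{i_pi_{p+1}}x_{i_{p+1}}/x_{i_p}\bigr)=\bigl(\prod_{p=1}^k a_{i_pi_{p+1}}\bigr)\cdot 1=1$ (indices taken cyclically, $i_{k+1}=i_1$). Since each factor is at most $1$ and their product equals $1$, every factor must equal $1$; hence $a_{ij}x_j/x_i=1$, i.e.\ $a_{ij}=x_i/x_j$, for each $(i,j)\in E^C(A)$. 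The identical argument applied to $B$ yields $b_{ij}=x_i/x_j$ for every $(i,j)\in E^C(B)$. This is exactly the visualisation property noted after Theorem~\ref{thm:comxsh}, and I expect this telescoping argument to be the only genuine point of the proof.

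Both conclusions are then immediate bookkeeping. For (i), if $(i,j)\in E^C(A)\cap E^C(B)$ then $a_{ij}=x_i/x_j=b_{ij}$. For (ii), if $(i,j)\in E^C(A)$ and $(j,i)\in E^C(B)$ then $a_{ij}b_{ji}=(x_i/x_j)(x_j/x_i)=1$. Since the substantive content is entirely contained in the tightness-on-critical-edges argument, and the passage from subeigenvector inequalities to per-edge equalities is the standard max-algebraic visualisation reasoning, I do not anticipate any serious obstacle; the one subtlety worth stating carefully is that the telescoping cancellation of the $x$-factors is what forces each scaled critical entry to be exactly $1$ rather than merely bounded by $1$.
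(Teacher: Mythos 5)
Your proof is correct and follows essentially the same route as the paper's: reduce the hypothesis to the existence of a common positive subeigenvector via Theorem~\ref{thm:comxsh}, diagonally scale by $X=\diag(x)$, and read off equality of the scaled entries on critical edges. The only (cosmetic) difference is that you establish tightness on critical edges directly by the telescoping product around a critical cycle, whereas the paper cites the invariance of $\mu$ and $E^C$ under diagonal similarity (Proposition 2.10 of \cite{SergeiVS}) and the standard visualisation fact; your inline argument is a valid self-contained substitute for that citation.
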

\begin{proof}
As, $\mu(A)=\mu(B)=1$, it follows that $\hat A=A$ and $\hat B=B$.  Thus, $\hat S=A\oplus B$. From the assumption, we obtain $$\mu(\hat S) = 1.$$  It now follows from Theorem \ref{thm:comxsh} that there exists some $x > 0$ with $A \otimes x \leq x$, $B \otimes x \leq x$.  Let $X=\diag(x)$ and consider the diagonally scaled matrices
$$X^{-1}AX, \;\;\; X^{-1}BX.$$
From the choice of $X$ it is immediate that
\begin{equation}
\label{eq:vis1}
X^{-1}AX \leq \mathbf{1}_{n \times n}, \;\;\; X^{-1}BX \leq \mathbf{1}_{n \times n}.
\end{equation}
Furthermore, from well-known facts about diagonal scaling (see Proposition 2.10 of \cite{SergeiVS}), it follows that
\begin{equation}
\label{eq:mu1}
1 = \mu(A) = \mu(X^{-1}AX), \;\; 1 = \mu(B) = \mu(X^{-1}BX)
\end{equation}
and that
\begin{equation}
\label{eq:Ec1}
E^C(A) = E^C(X^{-1}AX), \;\; E^C(B) = E^C(X^{-1}BX).
\end{equation}

We prove (i): Let $(i, j) \in E^C(A)\cap E^C(B)$ be given.  It follows from (\ref{eq:Ec1}) that $(i, j)$ is also a critical edge in the digraphs of $X^{-1}AX$ and $X^{-1}BX$.  (\ref{eq:vis1}) and (\ref{eq:mu1}) now imply that
$$\frac{a_{ij}x_j}{x_i} = \frac{b_{ij}x_j}{x_i} = 1.$$
Hence
$$a_{ij} = b_{ij} = \frac{x_i}{x_j}$$
and this completes the proof.

We prove (ii): Let $(i,j)\in E^C(A)$ and $(j,i)\in E^C(B)$. It follows from (\ref{eq:Ec1}) that $(i,j)$ is also a critical edge
in the digraph of $X^{-1}AX$ and $(j,i)$ is a critical edge in the digraph of $X^{-1}BX$. Then
$$\frac{a_{ij}x_j}{x_i}=\frac{b_{ji}x_i}{x_j}=1,$$
and hence
$$a_{ij}b_{ji}=\frac{a_{ij}x_j}{x_i}\cdot \frac{b_{ji}x_i}{x_j}=1.$$
\end{proof}

We next recall the following result, which was established in \cite{SergeiComm} and shows that commutativity is a sufficient condition for the existence of a common eigenvector for irreducible matrices.
\begin{proposition}[\cite{SergeiComm}]
\label{pro:conds}
Consider the set $\Psi\subset \RR_+^{n\times n}$ in (\ref{eq:Psi}).  Assume that each $A_i \in \Psi$ is irreducible and moreover that
\begin{equation}
\label{eq:comm1} A_i\otimes A_j = A_j\otimes A_i \;\; \mbox{ for } 1 \leq i, j \leq m.
\end{equation}
Then there exists some $x \in \operatorname{int}(\RR^n_+)$ with $A_i\otimes x = \mu(A_i)x$ for $1 \leq i \leq m$.
\end{proposition}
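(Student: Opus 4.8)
The plan is to induct on $m$, maintaining a positive vector that is a common max eigenvector of $A_1,\dots,A_k$. First I would normalise, replacing each $A_i$ by $\hat A_i=A_i/\mu(A_i)$; this preserves irreducibility and, since scalars commute with everything, preserves the commutativity~\eqref{eq:comm1}, while $\mu(\hat A_i)=1$. As $x$ satisfies $A_i\otimes x=\mu(A_i)x$ exactly when $\hat A_i\otimes x=x$, it suffices to find a positive common eigenvector of the $\hat A_i$ for the eigenvalue $1$. Writing $V(\hat A_i)=\{x\nnegv\mid \hat A_i\otimes x=x\}$ for the eigencone, the sole place commutativity enters is the following invariance observation: if $\hat A_i\otimes v=v$ then $\hat A_i\otimes(\hat A_j\otimes v)=\hat A_j\otimes(\hat A_i\otimes v)=\hat A_j\otimes v$, so $\hat A_j$ maps $V(\hat A_i)$ into itself. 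Consequently any intersection $W_k:=\bigcap_{i\le k}V(\hat A_i)$ is invariant under every $\hat A_j$, and, being an intersection of eigencones, is closed under max-combinations $x,y\mapsto \alpha x\oplus\beta y$ (since $\hat A_i\otimes(\alpha x\oplus\beta y)=\alpha x\oplus\beta y$ whenever $x,y$ are fixed).

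The heart of the argument is the following claim: if $B\nnegm$ is irreducible with $\mu(B)=1$ and $W$ is closed under max-combinations, invariant under $B$, and contains a positive vector $w$, then $W$ contains a positive eigenvector of $B$. This is exactly the step where a common \emph{subeigenvector} must be upgraded to a genuine eigenvector. To prove it I would iterate: set $x_t=B_{\otimes}^t\otimes w$. Invariance gives $x_t\in W$ for all $t$, and since $B$ is irreducible with $\mu(B)=1$ its powers are bounded and (as every node of an irreducible graph lies on a cycle) $x_t>0$ for all large $t$. Now I would invoke the cyclicity theorem for irreducible matrices in the max algebra (see \cite{ButkovicBook}): there exist $t_0$ and a period $\sigma\ge 1$ with $B_{\otimes}^{t+\sigma}=B_{\otimes}^t$ for all $t\ge t_0$, hence $x_{t+\sigma}=x_t$ for $t\ge t_0$. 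Setting $y=\bigoplus_{s=0}^{\sigma-1}x_{t_0+s}$, one computes $B\otimes y=\bigoplus_{s=0}^{\sigma-1}x_{t_0+s+1}=\bigoplus_{s=1}^{\sigma}x_{t_0+s}=y$, folding the index $\sigma$ back to $0$ via periodicity. Thus $y$ is a positive eigenvector of $B$ for the eigenvalue $1$, and $y\in W$ as a finite max-combination of the $x_{t_0+s}\in W$.

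With the claim in hand the induction is routine. Start from a positive eigenvector $v_1\in V(\hat A_1)$, which exists because $\hat A_1$ is irreducible. Given a positive $v_k\in W_k$, apply the claim with $B=\hat A_{k+1}$, $W=W_k$ and $w=v_k$ (legitimate by the invariance and closure noted above) to obtain a positive $v_{k+1}\in W_k\cap V(\hat A_{k+1})=W_{k+1}$. After $m$ steps $v_m>0$ satisfies $\hat A_i\otimes v_m=v_m$ for all $i$, which unwinds to $A_i\otimes v_m=\mu(A_i)v_m$, as required. I expect the main obstacle to be precisely the key claim: producing a common subeigenvector (e.g.\ via submultiplicativity of $\mu$ and Theorem~\ref{thm:comxsh}) is not enough, because for matrices with non-critical nodes the subeigencone strictly contains the eigencone, so passing from $\le$ to $=$ genuinely requires the periodic — not merely bounded — behaviour of the powers $B_{\otimes}^t$ together with the invariance supplied by commutativity. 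A secondary point needing care is the eventual positivity of $x_t$, which rests on the existence of walks of all sufficiently large lengths in a strongly connected digraph.
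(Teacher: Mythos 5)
Your argument is correct, and it is worth noting that the paper itself gives no proof of this proposition at all: it is quoted verbatim from \cite{SergeiComm}, so there is nothing internal to compare against line by line. Your route is a legitimate, essentially self-contained alternative to the one in that reference. In \cite{SergeiComm} the result is obtained structurally, by analysing how commuting matrices act on each other's critical graphs (commutativity forces the critical graphs to intersect, and a common eigenvector is assembled from suitable columns of the Kleene stars); you instead run a dynamical argument: after normalising to $\mu(\hat A_i)=1$ (legitimate, since irreducibility gives $\mu(A_i)>0$ and scalars pass through $\otimes$, so commutativity is preserved), you observe that each eigencone $V(\hat A_i)$ is $\hat A_j$-invariant and closed under finite max-combinations, and then upgrade a positive vector of the invariant max-cone $W_k$ to a genuine eigenvector of $\hat A_{k+1}$ by iterating and invoking the cyclicity theorem $B_{\otimes}^{t+\sigma}=B_{\otimes}^{t}$ for $t\ge t_0$ (\cite{ButkovicBook}), the Ces\`aro-type max-average $y=\bigoplus_{s=0}^{\sigma-1}B_{\otimes}^{t_0+s}\otimes w$ then being a fixed point. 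This correctly isolates the one place where something beyond a common \emph{sub}eigenvector is needed, and the periodicity argument is exactly the right tool; the only cosmetic imprecision is that $x_t>0$ already for all $t$ (every row of an irreducible matrix with $n\ge 2$ is nonzero), so no appeal to long walks is required. What your approach buys is elementarity and a clean induction on $m$; what the critical-graph approach of \cite{SergeiComm} buys is finer structural information (e.g.\ where the common eigenvector is supported and how the critical graphs interact), which your fixed-point construction does not expose.
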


The next corollary is an immediate consequence of Proposition \ref{pro:conds} and the fact, which we recalled in Section \ref{sec:intro}, that for an irreducible matrix $A$, the set $\setC_A$ is the subeigencone $V^*(A)$,
which coincides with the eigencone $V(\hat A^*)$.

\begin{corollary}
\label{cor:Astar}
Consider the set $\Psi\subset \RR_+^{n\times n}$ in (\ref{eq:Psi}).  Assume that each $A_i \in \Psi$ is irreducible and moreover that
\begin{equation}
\label{eq:comm2}
\hat A_i^*\otimes \hat A_j^* = \hat A_j^*\otimes \hat A_i^* \;\; \mbox{ for } 1 \leq i, j \leq m.
\end{equation}
Then there exists some $x\in \operatorname{int}(\RR^n_+)$ with $A_i\otimes x \leq \mu(A_i)x$ for $1 \leq i \leq m$.
\end{corollary}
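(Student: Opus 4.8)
The plan is to reduce the claim to Proposition~\ref{pro:conds} applied not to the matrices $A_i$ themselves but to their normalised Kleene stars $\hat A_i^*$, and then to translate the common \emph{eigenvector} produced there back into a common \emph{subeigenvector} of the original matrices. First I would observe that since each $A_i$ is irreducible, so is $\hat A_i=A_i/\mu(A_i)$ (the zero pattern is unchanged), and hence $\hat A_i^*$ is a \emph{positive} matrix: its off-diagonal entry $(\hat A_i^*)_{kl}$ is the maximal weight of a path from $k$ to $l$, which is strictly positive because irreducibility guarantees such a path exists, while the diagonal entries are at least $1$. In particular each $\hat A_i^*$ is irreducible. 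By the hypothesis~\eqref{eq:comm2}, the family $\{\hat A_1^*,\ldots,\hat A_m^*\}$ is a commuting family of irreducible nonnegative matrices, so Proposition~\ref{pro:conds} is directly applicable to it.

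Applying that proposition yields a vector $x\in\operatorname{int}(\RR^n_+)$ with $\hat A_i^*\otimes x=\mu(\hat A_i^*)x$ for every $i$. The next step is to identify the relevant eigenvalue: since $\mu(\hat A_i)=1$, the Kleene star is finite and satisfies $\mu(\hat A_i^*)=1$, as recalled in Section~\ref{sec:prelim}. Thus $x$ is a common max eigenvector of all the $\hat A_i^*$ with eigenvalue $1$, i.e.\ $x\in V(\hat A_i^*)$ for each $i$. Finally I would invoke the identification recalled in Section~\ref{sec:prelim}, namely $V(\hat A_i^*)=V^*(\hat A_i)=V^*(A_i)$ for irreducible $A_i$ (the first equality is Proposition~2.5 of~\cite{SergeiVS} for matrices of maximal cycle geometric mean $1$, and the second is the scaling invariance $V^*(A/\mu(A))=V^*(A)$). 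Since $V^*(A_i)=\{y\nnegv\mid A_i\otimes y\le\mu(A_i)y\}$, membership $x\in V^*(A_i)$ is exactly the desired inequality $A_i\otimes x\le\mu(A_i)x$, and this holds simultaneously for all $i$ with the single positive vector $x$.

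The corollary is therefore essentially a bookkeeping exercise once Proposition~\ref{pro:conds} is in hand, and I do not expect a genuine obstacle. The only points requiring care are confirming that passing to Kleene stars preserves irreducibility (handled by the positivity argument above) and correctly matching the eigenvalue $1$ of $\hat A_i^*$ with the subeigenvalue $\mu(A_i)$ of $A_i$ through the chain $V(\hat A_i^*)=V^*(\hat A_i)=V^*(A_i)$. The substantive work has already been carried out in Proposition~\ref{pro:conds}; the role of the commuting-Kleene-star hypothesis is simply to make that proposition applicable to the family $\{\hat A_i^*\}$ rather than to $\{A_i\}$ directly.
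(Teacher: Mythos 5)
Your argument is correct and is exactly the paper's intended route: the paper derives this corollary as an immediate consequence of Proposition~\ref{pro:conds} applied to the commuting irreducible family $\{\hat A_i^*\}$, together with the identification $V(\hat A_i^*)=V^*(A_i)$ for irreducible $A_i$ recalled in the preliminaries. Your additional checks (positivity of the Kleene stars and $\mu(\hat A_i^*)=1$) are just the details the paper leaves implicit.
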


Note that~\eqref{eq:comm1} implies~\eqref{eq:comm2}.

\subsection{{\it SR}-matrices and Globally Optimal Solutions}
In the remainder of this section, we will only focus on {\it{SR}}-matrices.  We first present the following corollary of Theorem \ref{thm:comxsh}, which develops the concept of simultaneous visualization for {\it{SR}}-matrices.   Before stating the corollary, define the {\bf anticritical graph} of an {\it{SR}}-matrix to consist of the edges $E^{\overline{C}}(A)$ given by:
\begin{equation}
\label{def:anticrit}
(i,j)\in E^{\overline{C}}(A)\Leftrightarrow (j,i)\in E^C(A)
\end{equation}

\begin{corollary}
\label{c:comxsh-sr}
Consider the set $\Psi\subset \RR_+^{n\times n}$ in (\ref{eq:Psi}).  Assume that each $A_i \in \Psi$ is an {\it{SR}}-matrix.
If any of the equivalent statements of Theorem \ref{thm:comxsh} holds, then
there exists some $x\in \textrm{int}(\RR^n_+)$ such that for $X = \diag(x)$ we have
\begin{equation}
\label{eq:sandwich}
\mu^{-1}(A_i)\cdot{\mathbf 1}_{n\times n}\leq X^{-1}A_iX\leq\mu(A_i)\cdot {\mathbf 1}_{n\times n}
\end{equation}
In particular,
\begin{equation}
\label{eq:CritNonC}
\begin{split}
(k,l)\in E^C(A_i)\Leftrightarrow (X^{-1}A_iX)_{kl}=\mu(A_i),\\
(k,l)\in E^{\overline{C}}(A_i)\Leftrightarrow (X^{-1}A_iX)_{kl}=\mu^{-1}(A_i),
\end{split}
\end{equation}
\end{corollary}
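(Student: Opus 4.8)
The plan is to extract the scaling from a common subeigenvector, deduce the sandwich (\ref{eq:sandwich}) from symmetric reciprocity, and then reduce (\ref{eq:CritNonC}) to a single equivalence whose converse is the real work. Since one of the equivalent conditions of Theorem~\ref{thm:comxsh} is assumed, part~(ii) of that theorem supplies $x\in\operatorname{int}(\RR^n_+)$ with $A_i\otimes x\le\mu(A_i)x$ for every $A_i\in\Psi$. Writing $X=\diag(x)$ and $B_i=X^{-1}A_iX$, this inequality is exactly $(B_i)_{kl}=(A_i)_{kl}x_l/x_k\le\mu(A_i)$ for all $k,l$, i.e. the upper estimate $X^{-1}A_iX\le\mu(A_i)\mathbf 1_{n\times n}$.

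For the lower estimate I would use that diagonal scaling preserves the {\it SR} property: $(B_i)_{kl}(B_i)_{lk}=(A_i)_{kl}(A_i)_{lk}=1$ for $k\neq l$, so each $B_i$ is again symmetrically reciprocal and $(B_i)_{kl}=1/(B_i)_{lk}\ge\mu^{-1}(A_i)$. This reciprocity is also what links the two lines of (\ref{eq:CritNonC}): because $(k,l)\in E^{\overline C}(A_i)\Leftrightarrow(l,k)\in E^C(A_i)$ and, by the {\it SR} property of $B_i$, $(B_i)_{kl}=\mu^{-1}(A_i)\Leftrightarrow(B_i)_{lk}=\mu(A_i)$, the anticritical equivalence is just the critical one applied to the reversed edge. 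Thus everything reduces to proving $(k,l)\in E^C(A_i)\Leftrightarrow(B_i)_{kl}=\mu(A_i)$.

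The forward implication is routine. Diagonal scaling leaves every cycle geometric mean, and hence the critical digraph, unchanged (Proposition~2.10 of \cite{SergeiVS}), so a critical cycle of $A_i$ containing $(k,l)$ remains a cycle of geometric mean $\mu(A_i)$ in $B_i$; since all entries of $B_i$ are at most $\mu(A_i)$ and the product along this cycle equals $\mu(A_i)$ raised to its length, each edge on it --- in particular $(k,l)$ --- must equal $\mu(A_i)$.

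The converse is the step I expect to be the main obstacle: deducing from $(B_i)_{kl}=\mu(A_i)$ that $(k,l)$ lies on a critical cycle of $A_i$. An arbitrary common subeigenvector is not enough, since an entry may be saturated without lying on any critical cycle, and the {\it SR} property only gives $(B_i)_{lk}=\mu^{-1}(A_i)$, so the associated two-cycle has geometric mean $1<\mu(A_i)$ and is not critical. My intended approach is to choose $x$ to be a \emph{strict} visualiser rather than an arbitrary subeigenvector --- a point in the relative interior of the common subeigencone $\setC_{\hat S}=\{x\in\operatorname{int}(\RR^n_+)\mid\hat S\otimes x\le x\}=V^*(\hat S)$, for which the saturation digraph of $\hat S$ equals $E^C(\hat S)$; existence of such a scaling is the standard strict-visualisation result of max algebra \cite{SergeiVS, ButkovicBook}. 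The delicate point, which I would isolate as a separate lemma, is to transfer strictness from the aggregate $\hat S$ down to each individual $A_i$: one must show that an edge saturated in $B_i$, though critical for $\hat S$, is actually carried by a critical cycle of $A_i$ alone, and this is precisely where the {\it SR} reciprocity (which forbids saturated two-cycles) has to be combined with the critical structure of $\hat S$. I expect this transfer, together with the correct choice of $x$ that guarantees it, to be the crux of the whole argument.
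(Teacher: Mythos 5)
Your treatment of \eqref{eq:sandwich} is precisely the paper's proof: the right-hand inequality is Theorem~\ref{thm:comxsh}(ii) verbatim, and the left-hand one is obtained by inverting $x_k^{-1}a_{kl}x_l\le\mu(A_i)$ and using $a_{kl}^{-1}=a_{lk}$, exactly as you do. That is where the paper's proof \emph{ends}: \eqref{eq:CritNonC} is asserted as following ``in particular'', with no argument. Your reduction of \eqref{eq:CritNonC} to the single equivalence $(k,l)\in E^C(A_i)\Leftrightarrow(X^{-1}A_iX)_{kl}=\mu(A_i)$ via the preserved reciprocity, and your proof of its forward direction by the cycle-saturation argument, are correct and already go beyond what the paper writes down.

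The converse implication, which you rightly isolate and leave unproved, is a genuine gap --- but it is equally a gap in the paper's own proof, since only the two forward implications of \eqref{eq:CritNonC} actually follow from \eqref{eq:sandwich}. Your worry is not hypothetical: for the single {\it SR}-matrix
\begin{equation*}
B=\left[\begin{array}{cccc}1&2&1/2&2\\1/2&1&2&1\\2&1/2&1&1\\1/2&1&1&1\end{array}\right],
\end{equation*}
one has $\mu(B)=2$ with unique critical cycle $1\to2\to3\to1$, yet $x=(1,1,1,1)^T$ satisfies $B\otimes x\le\mu(B)x$, so \eqref{eq:sandwich} holds with $X=I$ while $(X^{-1}BX)_{14}=2=\mu(B)$ and $(1,4)\notin E^C(B)$. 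Choosing $x$ as a strict visualiser of $\hat S$ repairs this when $m=1$, but the ``transfer'' lemma you flag as the crux is not merely delicate --- it can fail outright for $m\ge 2$: if $(k,l)$ is critical for $\hat A_j$ but not for $\hat A_i$ and the $(k,l)$ entries of $\hat A_i$ and $\hat A_j$ coincide, then $(k,l)\in E^C(\hat S)$ with $\hat s_{kl}$ equal to that common entry, so \emph{every} common subeigenvector saturates $(k,l)$ in $X^{-1}A_iX$ and no choice of $x$ can deliver the stated equivalence for $A_i$. So your proposal is honest about exactly the step that is missing, and that step cannot be closed along the lines either of you suggest without weakening \eqref{eq:CritNonC} to its forward implications. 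Fortunately the forward implications are all that the paper ever uses: in the sole application (Theorem~\ref{thm:3b3cond}), Remark~\ref{rem:3by3set} guarantees that every off-diagonal edge of a $3\times3$ {\it SR}-matrix is either critical or anticritical, and in that situation the forward implications do force the full equivalence.
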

\begin{proof}
The right-hand side inequality of (\ref{eq:sandwich}) is the same as
Theorem \ref{thm:comxsh} (ii). For the remaining left-hand side inequality
of (\ref{eq:sandwich}) we observe that $x_i^{-1}a_{ij}x_j\leq\mu(A)$ is equivalent to
$x_j^{-1}a^{-1}_{ij}x_i\geq\mu^{-1}(A)$. Then we apply $a^{-1}_{ij}=a_{ji}$.
\end{proof}

We next show that two distinct {\it{SR}}-matrices $A, B$ in $\mathbb{R}_+^{2 \times 2}$ cannot have a common subeigenvector.
Let
$$A=\left[ \begin{array}{cc}
        1   & a  \\
        1/a & 1  \\
\end{array} \right], B=\left[ \begin{array}{cc}
        1   & b  \\
        1/b & 1  \\
\end{array} \right]$$
and assume that $A \neq B$.
Clearly, $\mu(A)=\mu(B)=1$ and $\hat S = A\oplus B$.  If $a>b$, then $1/a<1/b$ and $\mu(\hat S)=a/b>1$. If $b>a$, then $1/b<1/a$ and $\mu(\hat S)=b/a>1$. In both cases, $\mu(\hat S)\neq 1.$  Hence by Theorem \ref{thm:comxsh}, $A$ and $B$ do not have a common subeigenvector.

Proposition \ref{pro:conds} shows that commuting irreducible matrices possess a common max eigenvector.  We now show that for $3 \times 3$ {\it{SR}}-matrices, commutativity is both necessary and sufficient for the existence of a common subeigenvector.

\begin{remark}
\label{rem:3by3set}
For an {\it {SR}}-matrix $A\in\RR_+^{3 \times 3}$, it is immediate that all cycle products of length one and two in $D(A)$ are equal to 1.  Further, there are two possible cycle products of length 3 in $D(A)$: $a_{12}a_{23}a_{31}$ and $a_{13}a_{32}a_{21}$.  As $A$ is an {\it {SR}}-matrix, it follows that
$$a_{12}a_{23}a_{31} = \frac{1}{a_{13}a_{32}a_{21}}$$
and hence at least one of the above products must be greater than or equal to 1.  Since $\mu(A)\geq 1$, one of the cycles of length three is critical, and the other cycle is anticritical.  Thus, $N^C(A)=N(A)$ and $A^C$ is irreducible.  Hence, it follows from Proposition \ref{pro:unique} that $A$ has a unique subeigenvector up to a scalar multiple in $\setC_A$ which is its max eigenvector.
Observe that each edge $(i,j)$ with $i\neq j$ belongs either to the critical or to the anticritical graph.
\end{remark}

Our next result characterises when two {\it {SR}}-matrices in $\RR_+^{3 \times 3}$ have a common subeigenvector.

\begin{theorem}
\label{thm:3b3cond} Let a set $\{A, B\}\subset\RR_+^{3\times 3}$ of {\it{SR}}-matrices be given.  Write $\hat S=\hat A\oplus \hat B$. The following are equivalent.
\begin{itemize}
\item[{(i)}] $\mu(\hat S)=1$;
\item[{(ii)}] $A$ and $B$ commute;
\item[{(iii)}] There exists a vector $x \in \textrm{int}(\RR^n_+)$ with $A \otimes x \leq \mu(A) x$, $B \otimes x \leq \mu(B) x$.
\end{itemize}
\end{theorem}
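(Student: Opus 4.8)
The plan is to establish the cycle of implications (i)$\Rightarrow$(ii)$\Rightarrow$(iii)$\Rightarrow$(i), two of which come essentially for free. The implication (iii)$\Rightarrow$(i) is precisely the implication (ii)$\Rightarrow$(i) of Theorem \ref{thm:comxsh} applied to the two-element set $\{A,B\}$, whose condition (i) reads $\mu(\hat S)=1$ with $\hat S=\hat A\oplus\hat B$. For (ii)$\Rightarrow$(iii), recall that every {\it SR}-matrix is irreducible, so Proposition \ref{pro:conds} applies to a commuting pair and yields an $x\in\operatorname{int}(\RR^3_+)$ with $A\otimes x=\mu(A)x$ and $B\otimes x=\mu(B)x$; such a common max eigenvector is a fortiori a common subeigenvector, which is exactly (iii). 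The real content is therefore the remaining implication (i)$\Rightarrow$(ii).

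For (i)$\Rightarrow$(ii), the strategy is to diagonally scale $A$ and $B$ into a rigid normal form. Since (i) is one of the equivalent statements of Theorem \ref{thm:comxsh}, Corollary \ref{c:comxsh-sr} supplies a single diagonal matrix $X=\diag(x)$, $x>0$, that simultaneously visualises $A$ and $B$, so that \eqref{eq:CritNonC} holds for both. The key observation is that for a $3\times3$ {\it SR}-matrix every off-diagonal edge lies on exactly one of the critical or the anticritical $3$-cycle (Remark \ref{rem:3by3set}), while every diagonal entry equals $1$. Consequently \eqref{eq:CritNonC} pins down \emph{every} entry of $X^{-1}AX$: writing $P$ for the cyclic permutation matrix, which satisfies $P^3=I$, the visualised matrix is the max-circulant $I\oplus c_1P\oplus c_2P^2$ with $\{c_1,c_2\}=\{\mu(A),\mu(A)^{-1}\}$, the assignment depending only on whether the critical cycle of $A$ runs in the $P$- or the $P^2$-direction. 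The same holds for $X^{-1}BX$ with $\mu(B)$ in place of $\mu(A)$.

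It then remains to note that any two max-polynomials in the single matrix $P$ commute: since $P^k\otimes P^l=P^{k+l}=P^{l+k}=P^l\otimes P^k$, one has $\left(\bigoplus_k a_kP^k\right)\otimes\left(\bigoplus_l b_lP^l\right)=\bigoplus_{k,l}a_kb_lP^{k+l}=\left(\bigoplus_l b_lP^l\right)\otimes\left(\bigoplus_k a_kP^k\right)$. Hence $X^{-1}AX$ and $X^{-1}BX$ commute, and because conjugation by the diagonal matrix $X$ satisfies $X^{-1}\otimes(A\otimes B)\otimes X=(X^{-1}AX)\otimes(X^{-1}BX)$ (using $X\otimes X^{-1}=I$), commutativity transfers back to $A$ and $B$, giving (ii) and closing the cycle.

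I expect the main obstacle to be the structural step of the second paragraph: recognising that simultaneous visualisation, combined with the rigid critical/anticritical dichotomy peculiar to $3\times3$ {\it SR}-matrices, forces each visualised matrix into max-circulant form. Once this is in place the commutativity is automatic, so the delicate points to verify carefully are that no off-diagonal entry escapes the critical/anticritical classification, that both orientations of the critical cycle still produce polynomials in the \emph{same} $P$, and that the degenerate transitive case $\mu=1$ is handled (there the visualised matrix collapses to the all-ones matrix $I\oplus P\oplus P^2$, which is still a polynomial in $P$).
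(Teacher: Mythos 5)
Your proof is correct and follows essentially the same route as the paper's: both reduce the one nontrivial implication to the simultaneous visualisation supplied by Corollary \ref{c:comxsh-sr} combined with the critical/anticritical rigidity of $3\times 3$ \emph{SR}-matrices from Remark \ref{rem:3by3set}, with the other two implications dispatched by Theorem \ref{thm:comxsh} and Proposition \ref{pro:conds} exactly as in the paper. The only difference is in the finish: where the paper derives the relation $a_{ij}b_{jk}=b_{ij}a_{jk}$ and verifies $A\otimes B=B\otimes A$ entrywise, you identify both visualised matrices as max-polynomials in the same cyclic permutation matrix $P$ and transfer commutativity back through the diagonal conjugation --- a slightly cleaner packaging of the same underlying argument.
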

\begin{proof}
The equivalence of (i) and (iii) follows immediately from Theorem \ref{thm:comxsh} so we will show that (i) and (ii) are also equivalent.

(ii) $\Rightarrow$ (i) follows immediately from Proposition \ref{pro:conds}.  We prove (iii)$\Rightarrow$(ii).
First note that it follows from Remark \ref{rem:3by3set} that for distinct $i, j, k$,  the edges $(i, j)$, $(j, k)$ are either both critical or both anti-critical for $A$.  The same is true of $B$.  Calculating $X^{-1}AX$ and $X^{-1}BX$ where $X=\diag(x)$, it follows from Theorem \ref{thm:comxsh}, Corollary \ref{c:comxsh-sr} and the identities $\mu(A)\mu(B)=\mu(B)\mu(A)$, $\mu(A)\mu^{-1}(B)=\mu^{-1}(B)\mu(A)$ that
\begin{equation}
\label{eq:OlComm}
a_{ij}b_{jk} = b_{ij}a_{jk}
\end{equation}
for any distinct $i,j,k$.  
It now follows from (\ref{eq:OlComm}) that for $i \neq j$
\begin{align*}
(A\otimes B)_{ij} & = a_{ii}b_{ij}\oplus a_{ij}b_{jj}\oplus a_{ik}b_{kj} \displaybreak[0]\\ \nonumber
 & = b_{ij}\oplus a_{ij}\oplus b_{ik}a_{kj} \displaybreak[0]\\ \nonumber
 & = (B\otimes A)_{ij} \displaybreak[0]\\ \nonumber
\end{align*}
where $k \neq i$, $k \neq j$.  Rewriting (\ref{eq:OlComm}) as $a_{ji}b_{ik} = b_{ji}a_{ik}$, it follows readily that $b_{ik}a_{ki} = a_{ij}b_{ji}$ and $a_{ik}b_{ki} = b_{ij}a_{ji}$.  It now follows that for $1 \leq i \leq 3$,
\begin{align*}
(A\otimes B)_{ii} & = a_{ii}b_{ii}\oplus a_{ij}b_{ji}\oplus a_{ik}b_{ki} \displaybreak[0]\\ \nonumber
 & = b_{ii}a_{ii}\oplus b_{ij}a_{ji}\oplus b_{ik}a_{ki} \displaybreak[0]\\ \nonumber
 & = (B\otimes A)_{ii} \displaybreak[0]\\ \nonumber
\end{align*}
Thus, $A \otimes B = B \otimes A$ as claimed.
\end{proof}

It is now straightforward to extend the above result to an arbitrary finite set of { \it{SR}}-matrices in $\RR_+^{3 \times 3}$.
\begin{theorem}
\label{thm:3by3finite}
Let a set $\{A_1, \ldots , A_m\}\subset\RR_+^{3\times 3}$ of {\it{SR}}-matrices be given.  Write $\hat S=\hat A_1 \oplus \cdots \oplus \hat A_m$. The following are equivalent.
\begin{itemize}
\item[{(i)}] $\mu(\hat S)=1$;
\item[{(ii)}] $A_i \otimes A_j = A_j \otimes A_i$ for all $i, j$;
\item[{(iii)}] There exists a vector $x \in \textrm{int}(\RR^n_+)$ with $A_i \otimes x \leq \mu(A_i) x$ for all $i$.
\end{itemize}
\end{theorem}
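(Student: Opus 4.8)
The plan is to leverage the two-matrix result, Theorem~\ref{thm:3b3cond}, together with the general equivalence established in Theorem~\ref{thm:comxsh}, and to reduce the statement for the whole family to a collection of pairwise statements. The equivalence of (i) and (iii) requires no new work: it is precisely the equivalence of conditions (i) and (ii) in Theorem~\ref{thm:comxsh}, which holds for any finite set of nonnegative matrices and in particular for a set of $3\times 3$ {\it SR}-matrices. So the entire content of the theorem lies in tying the commutativity condition (ii) to either of the other two.

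For the implication (ii)$\Rightarrow$(iii), I would invoke Proposition~\ref{pro:conds} directly. Each $A_i$ is an {\it SR}-matrix and hence irreducible (as recalled in Section~\ref{sec:prelim}), and (ii) asserts that the whole family commutes pairwise. Proposition~\ref{pro:conds} then supplies a single $x\in\textrm{int}(\RR^n_+)$ with $A_i\otimes x=\mu(A_i)x$ for every $i$; this $x$ satisfies the inequalities in (iii) (with equality), so (iii) holds.

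The remaining, and only substantive, direction is (i)$\Rightarrow$(ii). Here I would argue pairwise. Fix indices $i$ and $j$. Since $\hat S$ is the max-algebraic sum of the $\hat A_k$, we have the entrywise inequalities $\hat A_i\le\hat A_i\oplus\hat A_j\le\hat S$, and since the maximal cycle geometric mean is monotone with respect to the entrywise order this yields
\[
1=\mu(\hat A_i)\le\mu(\hat A_i\oplus\hat A_j)\le\mu(\hat S)=1,
\]
so $\mu(\hat A_i\oplus\hat A_j)=1$. Applying Theorem~\ref{thm:3b3cond} to the pair $\{A_i,A_j\}$ --- its hypothesis (i) being exactly what we have just verified --- gives $A_i\otimes A_j=A_j\otimes A_i$. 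As $i,j$ were arbitrary, (ii) follows.

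I expect no genuine obstacle. The only fact not already packaged in the cited results is the monotonicity $A\le B\Rightarrow\mu(A)\le\mu(B)$, which is immediate because every cycle of $D(A)$ is a cycle of $D(B)$ of no smaller weight, so the supremum of cycle geometric means can only increase. The one subtlety worth stating explicitly is that $\mu(\hat S)=1$ for the full family forces $\mu(\hat A_i\oplus\hat A_j)=1$ for each pair; it is precisely the upper bound $\mu(\hat S)=1$ that powers this squeeze and permits the clean reduction to the two-matrix theorem.
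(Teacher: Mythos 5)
Your proposal is correct and follows essentially the same route as the paper: (i)$\Leftrightarrow$(iii) from Theorem~\ref{thm:comxsh}, commutativity implying the existence of a common (sub)eigenvector via Proposition~\ref{pro:conds}, and the squeeze $\hat A_i\oplus\hat A_j\le\hat S$ forcing $\mu(\hat A_i\oplus\hat A_j)=1$ so that Theorem~\ref{thm:3b3cond} applies pairwise. The only cosmetic difference is that you close the cycle of implications through (ii)$\Rightarrow$(iii) rather than (ii)$\Rightarrow$(i), which is immaterial given the established equivalence.
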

\begin{proof}
As above, the equivalence of (i) and (iii) follows immediately from Theorem \ref{thm:comxsh} and (ii) $\Rightarrow$ (i) follows immediately from Proposition \ref{pro:conds}.  To show that (i) $\Rightarrow$ (ii), suppose $\mu(\hat S) =1 $.  Then it follows that for all $i, j$, $$\hat A_i \oplus \hat A_j \leq \hat S$$
and hence that $\mu(\hat A_i \oplus \hat A_j) \leq 1$.  As $\mu (\hat A_i \oplus \hat A_j) \geq 1$, it is immediate that $$\mu(\hat A_i \oplus \hat A_j) = 1$$ for all $i, j$ in $\{1, \ldots, m\}$.  It follows immediately from Theorem \ref{thm:3b3cond} that $$A_i \otimes A_j = A_j \otimes A_i$$ for $1 \leq i, j \leq m$ as claimed.
\end{proof}

We note with the following example that commutativity is not a necessary condition for $4\times 4$ {\it {SR}}-matrices to possess a common subeigenvector.
\begin{example}
\label{ex:ex4by4}
Consider the {\it{SR}}-matrices given by
$$A=\left [ \begin{array}{cccc}
   1 & 8 & 1/4 & 7 \\
   1/8 & 1 & 6 & 1/4 \\
   4 & 1/6 & 1 & 4 \\
   1/7 & 4 & 1/4 & 1 \\
\end{array} \right ] B=\left [ \begin{array}{cccc}
   1 & 4 & 5 & 9 \\
   1/4 & 1 & 1/8 & 9 \\
   1/5 & 8 & 1 & 1/8 \\
   1/9 & 1/9 & 8 & 1 \\
\end{array} \right ]$$
where $\mu(\hat S)=1$. Here, $x=\left [ \begin{array}{cccc}
   1 &
   0.721 &
   0.693 &
   0.667 \\
\end{array} \right ]^T$ is a common subeigenvector.  However, it can be readily verified that $A \otimes B \neq B \otimes A$.
\end{example}
\section{Min-max Optimal Points and the Generalised Spectral Radius}
\label{sec:minmax}
In general, it will not be possible to find a single vector $x$ that is globally optimal for the set $\Psi$ of {\it SR}-matrices given by (\ref{eq:Psi}).  With this in mind, in this short section we consider a different notion of optimal solution for the multiple objective functions $e_{A_i}: \textrm{int}(\RR_{+}^{n})\rightarrow \RR_+$, $1 \leq i \leq m$.  In fact, we consider the following optimisation problem.
\begin{equation}
\label{eq:OPT2}
\min\limits_{x \in \textrm{int}(\RR^n_+)} \left(\max\limits_{1 \leq i \leq m} e_{A_i}(x) \right).
\end{equation}
In words, we are seeking a weight vector that minimises the maximal relative error where the maximum is taken over the $m$ criteria ({\it{SR}}-matrices).

Corollary \ref{cor:CPrprop} has the following interpretation in terms of the optimisation problem given in (\ref{eq:OPT2}).
\begin{proposition}
\label{pro:OPT2} Consider the set $\Psi$ given by (\ref{eq:Psi}). Then: 
\begin{itemize}
\item[{(i)}] $\hmu(\Psi) = \min\limits_{x\in\operatorname{int}(\RR_+^n)} \left(\max\limits_{1 \leq i \leq m} e_{A_i}(x) \right)$;
\item[{(ii)}] $x$ solves (\ref{eq:OPT2}) if and only if $x \in \setC_{\Psi}$.
\end{itemize}
\end{proposition}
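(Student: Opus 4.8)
The plan is to reduce both parts to the single matrix $S$ of (\ref{eq:Smat}) and then invoke Theorem~\ref{thm:muMSPsiS} together with Corollary~\ref{cor:CPrprop}. The one observation that makes everything work is that the pointwise maximum of the error functions over $\Psi$ is itself the error function of $S$: for every $x\in\operatorname{int}(\RR_+^n)$,
\begin{equation*}
\max_{1\le i\le m} e_{A_i}(x)=\max_{1\le i\le m}\max_{k,l}(A_i)_{kl}\frac{x_l}{x_k}=\max_{k,l}\Big(\max_{1\le i\le m}(A_i)_{kl}\Big)\frac{x_l}{x_k}=e_S(x),
\end{equation*}
since $s_{kl}=\max_i (A_i)_{kl}$ by (\ref{eq:Smat}). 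Thus (\ref{eq:OPT2}) is exactly the problem of minimising $e_S(x)$ over $\operatorname{int}(\RR_+^n)$, and by Theorem~\ref{thm:muMSPsiS} we already know $\hmu(\Psi)=\mu(S)$.

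For (i) I would prove the two inequalities separately. For the lower bound, take any $x\in\operatorname{int}(\RR_+^n)$ and set $r=e_S(x)$; note $r>0$ since $S\neq 0$ and $x$ is positive. Then Corollary~\ref{cor:CPrprop}(ii) gives $x\in\setC_{\Psi,r}$, so $\setC_{\Psi,r}\neq\emptyset$, and Corollary~\ref{cor:CPrprop}(i) forces $r\ge\hmu(\Psi)$. Hence $e_S(x)\ge\hmu(\Psi)$ for every admissible $x$. For attainment in the case $\hmu(\Psi)>0$ (which covers all {\it SR}-matrices, since then $\mu(A_i)\ge 1$ gives $\mu(S)\ge 1$), Corollary~\ref{cor:CPrprop}(i) with $r=\hmu(\Psi)$ shows $\setC_{\Psi}=\setC_{\Psi,\hmu(\Psi)}\neq\emptyset$; any $x\in\setC_{\Psi}$ satisfies $e_S(x)\le\hmu(\Psi)$ by part (ii) of that corollary, so combined with the lower bound we get $e_S(x)=\hmu(\Psi)$. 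Therefore the minimum is attained and equals $\hmu(\Psi)$.

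Part (ii) then falls out immediately. By (i), $x$ solves (\ref{eq:OPT2}) precisely when $e_S(x)=\hmu(\Psi)$. Since the lower bound already gives $e_S(x)\ge\hmu(\Psi)$ for all $x$, the equality $e_S(x)=\hmu(\Psi)$ is equivalent to the inequality $e_S(x)\le\hmu(\Psi)$, which by Corollary~\ref{cor:CPrprop}(ii) is exactly the statement $x\in\setC_{\Psi,\hmu(\Psi)}=\setC_{\Psi}$.

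The only genuine obstacle is the attainment of the minimum: if $\mu(S)=0$ (possible for a nonzero but acyclic $S$), then $e_S(x)>0$ for every $x$ while $\inf_x e_S(x)=0$, so the infimum is not attained and ``$\min$'' would have to be read as ``$\inf$''. I would either restrict the statement to the case $\hmu(\Psi)>0$ (in particular the {\it SR}-matrix setting) or explicitly flag this degenerate exception; the non-degenerate argument above is otherwise unaffected.
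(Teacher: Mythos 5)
Your proof is correct and follows essentially the same route as the paper: both reduce the min-max problem to the single matrix $S$ via Corollary~\ref{cor:CPrprop} (equivalently, $\max_i e_{A_i}=e_S$) and read off (i) and (ii) from the characterisation of when $\setC_{\Psi,r}$ is nonempty. Your closing remark about the degenerate case $\hmu(\Psi)=0$, where the infimum is not attained, is a genuine observation that the paper's own one-line proof glosses over, though it is immaterial in the intended {\it SR}-matrix setting where $\hmu(\Psi)\ge 1$.
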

\begin{proof}
Corollary~\ref{cor:CPrprop} shows that there exists some $x \in \textrm{int}(\RR^n_+)$ with
$$\max\limits_{1 \leq i \leq m} e_{A_i}(x) \leq r$$
if and only if $r \geq \hmu(\Psi)$.   (i) follows from this observation.  The result of (ii) 
is then immediate from the definition of $\setC_{\Psi}$.
\end{proof}

\section{Pareto Optimality and the AHP}
\vskip0.2cm
Thus far, we have considered two different approaches to the multi-objective optimisation problem associated with the AHP.  In this section we turn our attention to what is arguably the most common framework adopted in multi-objective optimisation: Pareto Optimality \cite{Bewley, Miettinen}.  As above, we are concerned with the existence of optimal points for the set of objective functions $e_{A_i}$, for $1 \leq i \leq m$ associated with the set $\Psi$ (\ref{eq:Psi}) of {\it{SR}}-matrices. 
We first recall the notion of \textit{weak Pareto optimality}.

\begin{definition}[\cite{Miettinen}]
\label{def:WPOP}
 $w \in \operatorname{int}(\RR^n_+)$ is said to be a {\it{weak Pareto optimal point}} for the functions $e_{A_i}: \textrm{int}(\RR_+^{n})\rightarrow \RR_+ (1\leq i\leq m)$ if there does not exist $x \in \operatorname{int}(\RR^n_+)$ such that
$$e_{A_i}(x)<e_{A_i}(w)$$ for all $i=1, 2, ..., m$.
\end{definition}
The next lemma shows that every point in the set $\setC_{\Psi}$ is a weak Pareto optimal point for $e_{A_1}, \ldots , e_{A_m}$.
\begin{lemma}
\label{lem:WPOP}
Let $\Psi\subset\RR_{+}^{n\times n}$ be given by (\ref{eq:Psi}).
Any $w\in \setC_{\Psi}$ is a weak Pareto optimal point for $e_{A_1}, \ldots , e_{A_m}$.
\end{lemma}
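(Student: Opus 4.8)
The plan is to argue by contradiction, leaning on the characterisation of $\setC_{\Psi}$ as the solution set of the min-max problem (\ref{eq:OPT2}) already obtained in Proposition \ref{pro:OPT2}. First I would record that, by Proposition \ref{pro:OPT2}, every $w\in\setC_{\Psi}$ is a minimiser of $\max_{1\le i\le m}e_{A_i}$ and that the minimal value attained equals $\hmu(\Psi)$; in particular $\max_{1\le i\le m}e_{A_i}(w)=\hmu(\Psi)$. (If $\setC_{\Psi}=\emptyset$ there is nothing to prove, so this may be assumed nonempty.)

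Next, suppose towards a contradiction that $w$ fails to be a weak Pareto optimal point. By Definition \ref{def:WPOP} there is then some $x\in\operatorname{int}(\RR_+^n)$ with $e_{A_i}(x)<e_{A_i}(w)$ for every $i\in\{1,\ldots,m\}$. The key step is to pass from this coordinatewise strict inequality to a strict inequality between the two maxima. Choosing an index $j$ attaining $\max_{1\le i\le m}e_{A_i}(x)=e_{A_j}(x)$, I would estimate
$$\max_{1\le i\le m}e_{A_i}(x)=e_{A_j}(x)<e_{A_j}(w)\le\max_{1\le i\le m}e_{A_i}(w)=\hmu(\Psi).$$
Thus $x$ is an interior point whose maximal relative error lies strictly below $\hmu(\Psi)$, contradicting Proposition \ref{pro:OPT2}(i), which asserts that $\hmu(\Psi)$ is the \emph{minimum} of $\max_{1\le i\le m}e_{A_i}$ over all of $\operatorname{int}(\RR_+^n)$. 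Hence no such $x$ exists and $w$ is weak Pareto optimal.

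The argument is short and I expect no real obstacle; the only point requiring slight care is the middle step, where the \emph{finiteness} of $\Psi$ lets me select an index realising the maximum on the left so that the strict inequality survives passage to the max. With infinitely many objectives a supremum need not be attained and the strict inequality could degrade to a non-strict one, so the finite cardinality of $\Psi$ is genuinely used here.
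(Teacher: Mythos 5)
Your proposal is correct and follows essentially the same route as the paper: both reduce weak Pareto optimality to Proposition \ref{pro:OPT2}, observing that a point strictly dominating $w\in\setC_{\Psi}$ would have all errors, hence their maximum, strictly below $\hmu(\Psi)$, contradicting the min-max characterisation. Your extra remark about finiteness of $\Psi$ being needed to preserve strictness under the maximum is a valid refinement that the paper leaves implicit.
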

\begin{proof}
Let $w \in \setC_{\Psi}$ be given.  Then $e_{A_i}(w) \leq \hmu(\Psi)$ for $1 \leq i\leq m$.  If there exists some $x \in \textrm{int}(\RR^n_+)$ such that $e_{A_i}(x)<e_{A_i}(w)$ for $1 \leq i \leq m$, then for this $x$
$$e_{A_i}(x)<\hmu(\Psi)$$ for $1 \leq i \leq m$.  This contradicts Proposition \ref{pro:OPT2}.
\end{proof}

We next recall the usual definition of a Pareto optimal point.
\begin{definition}[\cite{Miettinen}]
\label{def:POP}
 $w \in \operatorname{int}(\RR^n_+)$ is said to be a {\it{Pareto optimal point}} for the functions $e_{A_i}: \operatorname{int}(\RR^n_+) \rightarrow \RR_+ (1\leq i\leq m)$ if $e_{A_i}(x)\leq e_{A_i}(w)$ for $1 \leq i \leq m$ implies $e_{A_i}(x)=e_{A_i}(w)$ for all $1 \leq i \leq m$.
\end{definition}
We later show that the multi-objective optimisation problem associated with the AHP always admits a Pareto optimal point.  We first present some simple facts concerning such points.
\begin{theorem}
\label{thm:pops}
Let $\Psi\subset\RR_{+}^{n\times n}$ be given by (\ref{eq:Psi}).  Then:
\begin{itemize}
\item[{(i)}] If $w\in \setC_{\Psi}$ is unique up to a scalar multiple, then it is a Pareto optimal point for $e_{A_1}, \ldots , e_{A_m}$;
\item[{(ii)}] If $w\in \setC_{A_i}$ is unique up to a scalar multiple for some $i\in\{1, 2, ..., m\}$, then it is a Pareto optimal point for $e_{A_1}, \ldots , e_{A_m}$.
\end{itemize}
\end{theorem}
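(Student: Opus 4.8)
The plan is to prove both parts by contradiction, exploiting the same structural feature in each case: every error function $e_{A_i}$ is invariant under positive scaling of its argument. Indeed, for any $\lambda>0$ one has $e_{A_i}(\lambda x)=\max_{j,k}(A_i)_{jk}(\lambda x_k)/(\lambda x_j)=e_{A_i}(x)$, so all objective values depend only on the ray through a point. I would record this observation first, since it is precisely what reconciles ``unique up to a scalar multiple'' with Pareto optimality: a hypothetical dominating point that turns out to be a scalar multiple of $w$ cannot strictly improve any objective.

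For part (i), I would suppose toward a contradiction that $w\in\setC_{\Psi}$ fails to be Pareto optimal. By Definition \ref{def:POP} this yields some $x\in\operatorname{int}(\RR^n_+)$ with $e_{A_i}(x)\leq e_{A_i}(w)$ for all $i$ and $e_{A_{i_0}}(x)<e_{A_{i_0}}(w)$ for at least one index $i_0$. Since $w\in\setC_{\Psi}$, Proposition \ref{pro:OPT2} gives $e_{A_i}(w)\leq\hmu(\Psi)$ for every $i$, whence $\max_i e_{A_i}(x)\leq\hmu(\Psi)$. Because $\hmu(\Psi)$ is the minimum value of this maximum (again Proposition \ref{pro:OPT2}(i)), the vector $x$ must attain it and therefore solves (\ref{eq:OPT2}), so $x\in\setC_{\Psi}$ by Proposition \ref{pro:OPT2}(ii). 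The uniqueness hypothesis then forces $x=\lambda w$ for some $\lambda>0$, and scale invariance gives $e_{A_{i_0}}(x)=e_{A_{i_0}}(w)$, contradicting the strict inequality.

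For part (ii), I would run the identical reduction but land the dominating vector in $\setC_{A_{i_0}}$ rather than $\setC_{\Psi}$. Assuming $w\in\setC_{A_{i_0}}$ is unique up to scaling and not Pareto optimal, I again obtain $x$ with $e_{A_i}(x)\leq e_{A_i}(w)$ for all $i$ and a strict inequality somewhere. Here I would invoke the characterisation (\ref{eq:murelmvec}): $\setC_{A_{i_0}}$ is exactly the minimiser set of $e_{A_{i_0}}$, so $e_{A_{i_0}}(w)=\mu(A_{i_0})$ is the global minimum. Then $e_{A_{i_0}}(x)\leq\mu(A_{i_0})$ forces equality, giving $x\in\setC_{A_{i_0}}$; uniqueness yields $x=\lambda w$, and scale invariance makes every $e_{A_i}(x)$ equal to $e_{A_i}(w)$, again a contradiction.

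I do not anticipate a genuine obstacle: the two parts are the same argument applied to two different extremal sets. The only step requiring care is the justification that the hypothetical improving vector $x$ re-enters the relevant set — Proposition \ref{pro:OPT2} supplies this for (i), and the minimiser description (\ref{eq:murelmvec}) of $\setC_{A_{i_0}}$ supplies it for (ii). Once $x$ is confined to that set, uniqueness up to scaling together with the scale invariance of the $e_{A_i}$ closes both cases immediately.
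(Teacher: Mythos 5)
Your proposal is correct and follows essentially the same route as the paper: confine any candidate dominating vector $x$ to $\setC_{\Psi}$ (for (i)) or to $\setC_{A_i}$ (for (ii)) via the minimising characterisations, invoke uniqueness up to scaling to get $x=\alpha w$, and conclude by scale invariance of the $e_{A_i}$. The only cosmetic difference is that you argue by contradiction while the paper verifies Definition \ref{def:POP} directly, which is an immaterial rephrasing.
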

\begin{proof}
Observe that both conditions imply $\hat\mu(\Psi)>0$.

(i) Assume that $w\in \setC_{\Psi}$ is unique up to a scalar multiple. Pick some $x\in \textrm{int}(\RR^n_+)$ such that $e_{A_i}(x)\leq e_{A_i}(w)$ for all $i$. Then, $e_{A_i}(x)\leq\hat\mu(\Psi)$ for all $i$ which implies that $x\in \setC_\Psi$. Thus, $x=\alpha w$ for some $\alpha\in\RR_+$. Hence, $e_{A_i}(x)=e_{A_i}(w)$ for all $i$ and $w$ is a Pareto optimal point.

(ii) Assume that for some $A_i\in\Psi$, $w\in \setC_{A_i}$ is unique up to a scalar multiple.  
\if{
Since $A_i\neq 0$ (otherwise $\setC_{A_i}$ contains all positive vectors),
$\setC_{A_i}$ being non-empty implies $\mu(A_i)>0$, and further $\mu(S)>0$ and
$\setC_S\neq\emptyset$. Hence $\setC_{\Psi}=\cap_{i=1}^m \setC(A_i)= \setC_S$ 
contains at least one positive vector with all of its multiples, and this vector is nothing else but $w$.
It remains to apply (i).
}\fi
Suppose $x\in \textrm{int}(\RR^n_+)$ is such that $e_{A_j}(x)\leq e_{A_j}(w)$ for all $1 \leq j \leq m$.  In particular, $x \in \setC_{A_i}$, and this implies that $x=\alpha w$ for some $\alpha\in\RR$. Further, it is immediate that for any other $A_j\in \Psi$ ($i\neq j$), we have $e_{A_j}(x)=e_{A_j}(w)$. Thus, $w$ is a Pareto optimal point.
\end{proof}

By Proposition~\ref{pro:unique}, condition~(i) is equivalent to $N^C(S)=N(S)$ and $S^C$ to be irreducible, and condition~(ii) is equivalent to $N^C(A_i)=N(A_i)$ and $A_i^C$ to be irreducible for some $i$.

\begin{corollary}
\label{cor:pops}
Let the set $\Psi\subset \RR_{+}^{n\times n}$ given by (\ref{eq:Psi}) consist of {\it{SR}}-matrices. For $n\in \{2, 3\}$, any $w\in \setC_{A_i} (1\leq i\leq m)$ is a Pareto optimal point for $e_{A_1}, \ldots , e_{A_m}$.
\end{corollary}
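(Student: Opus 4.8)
The plan is to reduce the statement to Theorem~\ref{thm:pops}~(ii), whose hypothesis asks that $\setC_{A_i}$ be unique up to a scalar multiple for some index $i$. In fact I would establish the stronger fact that \emph{every} $2\times 2$ and $3\times 3$ {\it SR}-matrix has this uniqueness property, so that the conclusion holds for each index $i$ and each $w\in\setC_{A_i}$. By the remark immediately following Theorem~\ref{thm:pops} (which rests on Proposition~\ref{pro:unique}), uniqueness of $\setC_{A_i}$ up to a scalar is equivalent to the two conditions $N^C(A_i)=N(A_i)$ and $A_i^C$ irreducible. Hence the whole task reduces to verifying these two graph-theoretic conditions for all {\it SR}-matrices of size $n\in\{2,3\}$.

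For $n=3$ there is nothing new to prove: Remark~\ref{rem:3by3set} already shows that for any $3\times 3$ {\it SR}-matrix exactly one of the two length-three cycles is critical, that $N^C(A_i)=N(A_i)$, and that $A_i^C$ is irreducible, so $\setC_{A_i}$ is a single ray. For $n=2$ I would argue directly. Writing $A_i=\left[\begin{smallmatrix} 1 & a \\ 1/a & 1\end{smallmatrix}\right]$, the reciprocal identity $a_{12}a_{21}=1$ forces the two-cycle $1\to 2\to 1$ to have cycle geometric mean $1$; together with the loops of weight $1$ this gives $\mu(A_i)=1$ and shows that both nodes lie on a critical cycle, i.e.\ $N^C(A_i)=N(A_i)=\{1,2\}$. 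Since the edges $(1,2)$ and $(2,1)$ are then both critical, $A_i^C$ retains these off-diagonal entries and is therefore irreducible. Thus the same two conditions hold, and $\setC_{A_i}$ is again unique up to a scalar multiple.

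With uniqueness established in both cases, I would invoke Theorem~\ref{thm:pops}~(ii) for each $A_i$ to conclude that every $w\in\setC_{A_i}$ is a Pareto optimal point for $e_{A_1},\ldots,e_{A_m}$, which is exactly the claim. I do not expect a genuine obstacle here, as the argument is essentially an application of already-proved results; the only point requiring a little care is the $n=2$ verification that the two-cycle is critical, so that $A_i^C$ is irreducible rather than a pair of isolated loops. This follows at once from the defining identity $a_{12}a_{21}=1$.
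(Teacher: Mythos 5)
Your proposal is correct and follows essentially the same route as the paper: both reduce to Theorem~\ref{thm:pops}~(ii) by establishing that $\setC_{A_i}$ is a single ray, citing Remark~\ref{rem:3by3set} for $n=3$ and checking $N^C(A_i)=N(A_i)$ with $A_i^C$ irreducible for $n=2$. Your explicit verification of the $2\times 2$ case via $a_{12}a_{21}=1$ just fills in a detail the paper states without proof.
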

\begin{proof}
Notice that from Remark \ref{rem:3by3set} for $3\times 3$ case, there exists a unique subeigenvector up to a scalar multiple in each $\setC_{A_i}$ for $1\leq i\leq m$.  This is also true for the $2\times 2$ case because $N^C(A)=N(A)$ and $A^C$ is irreducible. The result directly follows from (ii) in Theorem \ref{thm:pops}.
\end{proof}

The following example demonstrates point (i) in Theorem \ref{thm:pops}.
\begin{example}
\label{ex:popsDPU}
Consider the following matrices given by
$$A=\left [ \begin{array}{cccc}
   1 & 9 & 1/4 & 2 \\
   1/9 & 1 & 6 & 3 \\
   4 & 1/6 & 1 & 1/4 \\
   1/2 & 1/3 & 4 & 1 \\
\end{array} \right ] B=\left [ \begin{array}{cccc}
   1 & 1/2 & 4 & 1/8 \\
   2 & 1 & 3 & 2 \\
   1/4 & 1/3 & 1 & 5 \\
   8 & 1/2 & 1/5 & 1 \\
\end{array} \right ].$$ $S$ matrix is obtained as follows
$$S=\left [ \begin{array}{cccc}
   1 & 9 & 4 & 2 \\
   2 & 1 & 6 & 3 \\
   4 & 1/3 & 1 & 5 \\
   8 & 1/2 & 4 & 1 \\
\end{array} \right ]$$
where $N^C(S)=N(S)$ and $S^C$ is irreducible. From Proposition~\ref{pro:unique}, we have a unique vector (up to a scalar multiple) in $\setC_{\Psi}$: $w=\left [ \begin{array}{cccc} 1 & 0.758 & 0.861 & 1.174 \\ \end{array} \right ]^T$. Figure \ref{fig:ex1} below represents the values of $e_{A}(x)$ and $e_{B}(x)$ at $w$ and some points in $C_A$, $C_B$ and $\textrm{int}(\RR^n_+)$. Remark that Pareto optimality is observed at $w\in C_{\Psi}$  where $e_{A}(w)=e_{B}(w)=6.817$.
\begin{figure}[h!]
\centering
\includegraphics[width=0.8\textwidth]{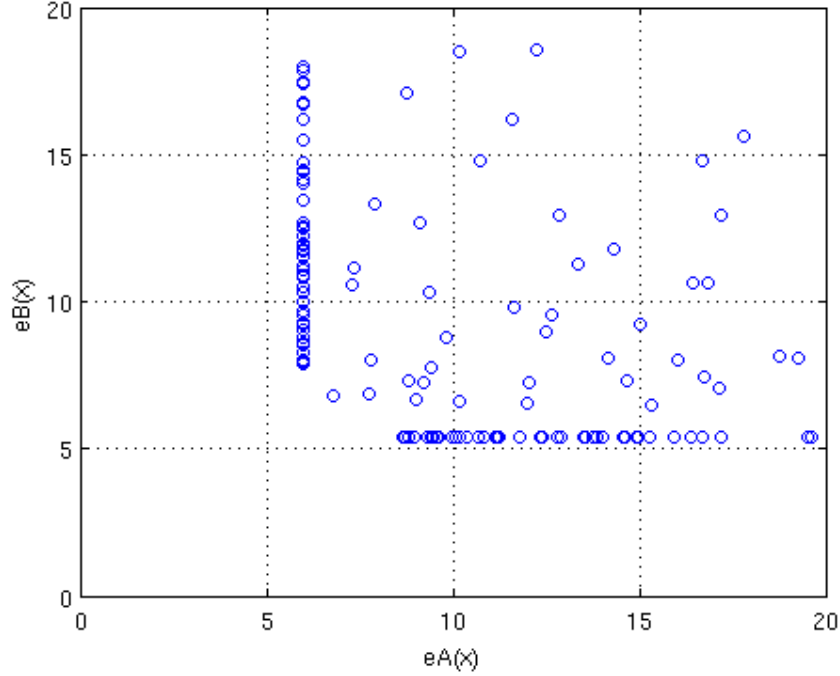}
\caption[Refer to Example \ref{ex:popsDPU}]{Refer to Example \ref{ex:popsDPU}}
\label{fig:ex1}
\end{figure}
\end{example}
Our objective in the remainder of this section is to show that the multi-objective optimisation problem associated with the AHP \textit{always} admits a Pareto optimal solution.  We first recall the following general result giving a sufficient condition for the existence of a Pareto optimal point with respect to a set $E\subset\RR^{n}$.  Essentially, this is a direct application of the fact that a continuous function on a compact set always attains its minimum.
\begin{theorem}[\cite{Bewley}]
\label{thm:existence}
Let $E \subseteq \RR^n$ be nonempty and compact.  Let a set of continuous functions $\{f_1,\ldots , f_m\}$ be given where
$$f_i: E \rightarrow \RR_+$$ for $1 \leq i \leq m$.
There exists $w \in E$ such that $x \in E$, $f_i(x) \leq f_i(w)$ for $1 \leq i \leq m$ implies $f_i(x) = f_i(w)$ for $1 \leq i \leq m$.
\end{theorem}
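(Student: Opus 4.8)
The plan is to reduce this vector-valued existence question to the scalar fact quoted just before the statement, namely that a continuous real-valued function on a nonempty compact set attains its minimum. The natural device is a weighted-sum scalarisation of the objectives, and since no weights are distinguished in the hypotheses the simplest choice is to give each objective equal weight.

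Concretely, I would first define the aggregate function $F\colon E\to\RR_+$ by $F(x)=\sum_{i=1}^m f_i(x)$. As a finite sum of continuous functions $F$ is continuous, and since $E$ is nonempty and compact, $F$ attains its minimum on $E$ at some point $w\in E$. This $w$ is the candidate Pareto optimal point, and the rest of the argument verifies that minimising the sum indeed forces Pareto optimality.

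The remaining step is to check this. Suppose $x\in E$ satisfies $f_i(x)\le f_i(w)$ for all $1\le i\le m$. Summing these inequalities gives $F(x)\le F(w)$; but $w$ minimises $F$, so $F(x)\ge F(w)$, forcing $F(x)=F(w)$, i.e. $\sum_{i=1}^m f_i(x)=\sum_{i=1}^m f_i(w)$. Combined with the termwise inequalities $f_i(x)\le f_i(w)$, equality of the sums forces equality in every term, since if any one inequality were strict the total would be strictly smaller. Hence $f_i(x)=f_i(w)$ for all $i$, which is exactly the property required by the statement.

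There is no serious obstacle here; the only point that needs care is the final termwise argument, where one uses that a family of real inequalities $f_i(x)\le f_i(w)$ whose sums coincide must in fact hold with equality throughout. The strict positivity of all the weights (here all equal to $1$) is precisely what upgrades the scalarisation from \emph{weak} to \emph{full} Pareto optimality: had any weight been allowed to vanish, the argument would only deliver weak optimality of the type in Definition~\ref{def:WPOP}. I would also remark that the hypothesis $f_i\ge 0$ is not actually used; only continuity of the $f_i$ and compactness and nonemptiness of $E$ are needed.
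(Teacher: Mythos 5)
Your proposal is correct and follows essentially the same route as the paper, which notes immediately after the statement that a minimiser of a positively weighted sum $\sum_{i=1}^m \alpha_i f_i(x)$ (you take $\alpha_i=1$) on the compact set $E$ is Pareto optimal. Your filling-in of the termwise equality argument, and the observation that positivity of the weights is what upgrades weak to full Pareto optimality, match the paper's intent exactly.
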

This result follows from elementary real analysis and the observation that if $w$ minimises the (continuous) weighted sum $\sum\limits_{i=1}^m \alpha_i f_i(x)$ where $\alpha_i > 0$ for $1 \leq i \leq m$, then $w$ must be Pareto optimal for the functions $f_1, \ldots, f_m$.  

A point $w$ satisfying the conclusion of the Theorem \ref{thm:existence} is said to be Pareto optimal for $\{f_1, \ldots, f_m\}$ {\it with respect to }$E$.  Thus, for any multi-objective optimisation problem with continuous objective functions defined on a compact set, there exists a point that is Pareto optimal with respect to the given set.

To apply the above result to the AHP, we first note that for a set $\Psi$ of {\it{SR}}-matrices, $\setD_{\Psi}$ is compact by Theorem \ref{thm:geop} (recall that {\it{SR}}-matrices are positive and hence irreducible).  
We now show that any point in $\setD_{\Psi}$ that is Pareto optimal with respect to $\setD_{\Psi}$ is also Pareto optimal with respect to the set $\textrm{int}(\RR^n_+)$.  

Although we don't specifically use the {\it{SR}} property in the following, we assume $\Psi$ to consist of {\it{SR}}-matrices as we are primarily interested in the AHP application. Instead, we could just assume that $A_i$ are irreducible implying that
$\setD_{\Psi}$ is compact and a Pareto optimal point exists.
\begin{lemma}
\label{lem:existence}
Consider the set $\Psi$ in (\ref{eq:Psi}) and assume that $A_i$ is an {\it{SR}}-matrix for $1 \leq i \leq m$. Let $w$ be a Pareto optimal point for $\{e_{A_1}, \ldots, e_{A_m}\}$ with respect to $\setD_{\Psi}$. Then, $w$ is also Pareto optimal for $e_{A_1}, \ldots , e_{A_m}$ with respect to $\textrm{int}(\RR^n_+)$.
\end{lemma}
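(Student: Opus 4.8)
The plan is to argue directly from Definition~\ref{def:POP}, the essential observation being that any positive vector which weakly dominates $w$ in all of the error functions must itself belong to $\setC_{\Psi}$. Accordingly, I would begin by taking an arbitrary $x\in\textrm{int}(\RR^n_+)$ with $e_{A_i}(x)\leq e_{A_i}(w)$ for all $i$, and aim to deduce $e_{A_i}(x)=e_{A_i}(w)$ for all $i$, which is exactly what Pareto optimality with respect to $\textrm{int}(\RR^n_+)$ demands.

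First I would use the hypothesis $w\in\setD_{\Psi}\subseteq\setC_{\Psi}$ together with Proposition~\ref{pro:OPT2}: since $w\in\setC_{\Psi}$ solves the min-max problem (\ref{eq:OPT2}), we have $\max_{1\leq i\leq m}e_{A_i}(w)=\hmu(\Psi)$. The key step is then immediate: the dominating vector satisfies $\max_i e_{A_i}(x)\leq\max_i e_{A_i}(w)=\hmu(\Psi)$, so $e_{A_i}(x)\leq\hmu(\Psi)$ for every $i$, and hence $x\in\setC_{\Psi}$ by the definition (\ref{eq:CPr}) (equivalently by Corollary~\ref{cor:CPrprop}(ii)). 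This is the heart of the argument: a weakly dominating point cannot enlarge the maximal error, and $\setC_{\Psi}$ is precisely the set on which that maximum attains its least possible value $\hmu(\Psi)$.

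Next I would reduce to the normalised set $\setD_{\Psi}$. Each error function is invariant under positive scaling of its argument, $e_{A_i}(\lambda x)=e_{A_i}(x)$ for $\lambda>0$, so I would set $x'=x/x_1$; then $x'_1=1$, we have $e_{A_i}(x')=e_{A_i}(x)$ for all $i$, and $x'\in\setC_{\Psi}$ as a positive rescaling of $x$, whence $x'\in\setD_{\Psi}$. Finally, applying the assumed Pareto optimality of $w$ with respect to $\setD_{\Psi}$ to the admissible comparison point $x'\in\setD_{\Psi}$ yields $e_{A_i}(x')=e_{A_i}(w)$, and therefore $e_{A_i}(x)=e_{A_i}(w)$, for all $i$. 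This is exactly the conclusion sought. The only step requiring genuine care is the membership $x\in\setC_{\Psi}$ established in the second paragraph; once that is in hand, scale-invariance and Definition~\ref{def:POP} close the argument routinely.
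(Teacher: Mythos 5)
Your proposal is correct and follows essentially the same route as the paper: both arguments rest on the observation that any positive vector weakly dominating $w$ must have all errors bounded by $\hmu(\Psi)$ and hence lie in $\setC_{\Psi}$, after which scale-invariance of the $e_{A_i}$ reduces the comparison to $\setD_{\Psi}$ where the assumed Pareto optimality applies. The only cosmetic difference is that the paper phrases the first step contrapositively (no $x\notin\setC_{\Psi}$ can dominate $w$) and leaves the normalisation $x\mapsto x/x_1$ implicit, whereas you make both explicit.
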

\begin{proof}
Assume that $w\in \setD_{\Psi}$ is a Pareto optimal point with respect to $\setD_{\Psi}$.  Suppose $x \in \textrm{int}(\RR^n_+) \backslash \setC_{\Psi}$.  Then from the definition of $\setC_{\Psi}$ (\ref{eq:CPr}), it follows that
\begin{equation}
\label{eq:Par1}
e_{A_{i_0}}(x) > \hmu(\Psi) \mbox{ for some } i_0.
\end{equation}
As $w \in \setD_{\Psi}$, $e_{A_i}(w) \leq \hmu(\Psi)$ for $1 \leq i \leq m$.  It follows immediately from (\ref{eq:Par1}) that for any $x \notin \setC_{\Psi}$, it cannot happen that $e_{A_i}(x) \leq e_{A_i}(w)$ for $1 \leq i \leq m$.

Let $x$ in $\setC_{\Psi}$ be such that
$$e_{A_i}(x) \leq e_{A_i}(w) \;\; \mbox{ for } 1 \leq i \leq m.$$  As $e_{A_i}(\lambda x) = e_{A_i}(x)$ for all $\lambda > 0$, $1 \leq i \leq m$, and $w$ is Pareto optimal with respect to $\setD_{\Psi}$, it follows that $e_{A_i}(x) = e_{A_i}(w)$ for $1 \leq i \leq m$.
\end{proof}

Our next step is to show that there exists a point $x \in \setD_{\Psi}$ that is Pareto optimal with respect to $\setD_{\Psi}$.
\begin{proposition}
\label{pro:existence}
Consider the set $\Psi$ in (\ref{eq:Psi}) and assume that $A_i$ is an {\it{SR}}-matrix for $1 \leq i \leq m$.  There exists $x \in \setD_{\Psi}$ that is Pareto optimal for $e_{A_1}, \ldots, e_{A_m}$ with respect to $\setD_{\Psi}$.
\end{proposition}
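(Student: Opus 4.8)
The plan is to obtain the statement as a direct application of Theorem \ref{thm:existence}, taking $E = \setD_{\Psi}$ and $f_i = e_{A_i}$ for $1 \leq i \leq m$. For this I need to verify the hypotheses of that theorem for the present situation: namely that $\setD_{\Psi}$ is a nonempty compact subset of $\RR^n$, and that each error function $e_{A_i}$ restricts to a continuous function on $\setD_{\Psi}$. Once these are in place the conclusion is immediate, since a point $w$ satisfying the conclusion of Theorem \ref{thm:existence} for $E = \setD_{\Psi}$ is by definition Pareto optimal for $e_{A_1}, \ldots, e_{A_m}$ with respect to $\setD_{\Psi}$.

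First I would settle nonemptiness and compactness. Since each $A_i$ is an {\it SR}-matrix, it is positive, hence irreducible, and moreover $\mu(A_i) \geq 1$. Consequently $\mu(S) > 0$, and Theorem \ref{thm:muMSPsiS} gives $\hmu(\Psi) = \mu(S) > 0$. Applying Corollary \ref{cor:CPrprop}~(i) with $r = \hmu(\Psi)$ (so that both $r > 0$ and $r \geq \hmu(\Psi)$ hold) yields $\setC_{\Psi} \neq \emptyset$; since membership of $\setC_{\Psi}$ is invariant under positive scaling and every element lies in $\textrm{int}(\RR^n_+)$, we may normalise the first coordinate to conclude $\setD_{\Psi} \neq \emptyset$. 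Compactness is then exactly Theorem \ref{thm:geop}~(iii), valid precisely because all matrices in $\Psi$ are irreducible; recall here that $\setD_{\Psi}$ abbreviates $\setD_{\Psi, \hmu(\Psi)}$.

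Next I would record continuity. From the defining formula (\ref{eq:errf}), each $e_{A_i}$ is the pointwise maximum of the finitely many functions $x \mapsto (A_i)_{jk}\, x_k / x_j$, every one of which is continuous on $\textrm{int}(\RR^n_+)$ because $x_j > 0$ there. A pointwise maximum of finitely many continuous functions is continuous, so each $e_{A_i}$ is continuous on $\textrm{int}(\RR^n_+)$, and in particular on the subset $\setD_{\Psi}$. With nonemptiness, compactness, and continuity established, Theorem \ref{thm:existence} applies verbatim and delivers the desired $x \in \setD_{\Psi}$.

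There is no genuinely hard step here; the proof is essentially a bookkeeping exercise confirming that the standing hypotheses of the Bewley-type existence result are met. The only point requiring mild care is that the \emph{normalised} set $\setD_{\Psi}$ inherits both nonemptiness and compactness, and I would handle this by reducing to the single-matrix set $\setD_S$ through the identity $\setD_{\Psi} = \setD_S$ noted after Theorem \ref{thm:subs}, thereby inheriting the geometric properties recorded in Theorem \ref{thm:geop} (themselves extensions of the single-criterion results of \cite{Drie4}).
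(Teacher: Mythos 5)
Your proposal is correct and follows essentially the same route as the paper: verify that $\setD_{\Psi}$ is nonempty and compact (via $\hmu(\Psi)>0$, Corollary \ref{cor:CPrprop} and Theorem \ref{thm:geop}) and that each $e_{A_i}$ is continuous, then invoke Theorem \ref{thm:existence}. You simply spell out in more detail the steps the paper states in one line each.
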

\begin{proof}
First note that $\setD_{\Psi}\neq \emptyset$ since $\hat \mu(\Psi)>0$.  Theorem \ref{thm:geop} shows that $\setD_{\Psi}$ is compact.  Furthermore, for any irreducible matrix $A$, the function $e_A:\setD_{\Psi} \rightarrow \RR_+$ is a composition of continuous functions and hence continuous on a compact set.  Theorem \ref{thm:existence} implies that there exists $w$ in $\setD_{\Psi}$ that is Pareto optimal with respect to $\setD_{\Psi}$.
\end{proof}
Combining Proposition \ref{pro:existence} with Lemma \ref{lem:existence}, we immediately obtain the following result.
\begin{corollary}
\label{cor:existence}
Consider the set $\Psi$ in (\ref{eq:Psi}) and assume that $A_i$ is an {\it{SR}}-matrix for $1 \leq i \leq m$.  There exists $x \in \setD_{\Psi}$ that is Pareto optimal for $e_{A_1}, \ldots, e_{A_m}$ with respect to $\textrm{int}(\RR^n_+)$.
\end{corollary}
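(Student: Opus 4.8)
The plan is to obtain the corollary by chaining the two immediately preceding results, so the proof itself is essentially a one-line composition; the substantive content has already been placed in Proposition~\ref{pro:existence} and Lemma~\ref{lem:existence}. Concretely, I would first invoke Proposition~\ref{pro:existence} to produce a point $w\in\setD_{\Psi}$ that is Pareto optimal for $e_{A_1},\ldots,e_{A_m}$ \emph{with respect to the compact set} $\setD_{\Psi}$. I would then feed exactly this $w$ into Lemma~\ref{lem:existence}, whose hypothesis is precisely that ``$w$ is Pareto optimal with respect to $\setD_{\Psi}$'', to conclude that $w$ is Pareto optimal with respect to all of $\textrm{int}(\RR^n_+)$. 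Since the output of the Proposition matches the input of the Lemma verbatim, no further argument is needed.

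It is worth recording where the real work sits, since that is what makes this ``immediate'' step legitimate. Proposition~\ref{pro:existence} relies on three ingredients: that $\setD_{\Psi}$ is nonempty (because $\hmu(\Psi)>0$ for irreducible, hence {\it SR}-, matrices), that $\setD_{\Psi}$ is compact (Theorem~\ref{thm:geop}(iii), using that {\it SR}-matrices are positive and therefore irreducible), and that each $e_{A_i}$ is continuous on $\setD_{\Psi}$; these let one apply the Bewley-type existence result Theorem~\ref{thm:existence}. Lemma~\ref{lem:existence} in turn upgrades the domain from $\setD_{\Psi}$ to $\textrm{int}(\RR^n_+)$ by two observations: any $x\notin\setC_{\Psi}$ already fails $e_{A_i}(x)\le e_{A_i}(w)$ for all $i$, because $e_{A_{i_0}}(x)>\hmu(\Psi)\ge e_{A_{i_0}}(w)$ for some $i_0$, while any $x\in\setC_{\Psi}$ can be rescaled into $\setD_{\Psi}$ using the scale-invariance $e_{A_i}(\lambda x)=e_{A_i}(x)$ for $\lambda>0$.

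The only point to be careful about — the mild ``obstacle'' — is the interface between the two statements: one must verify that the notion of Pareto optimality ``with respect to $\setD_{\Psi}$'' supplied by Proposition~\ref{pro:existence} is the exact hypothesis consumed by Lemma~\ref{lem:existence}, and that the rescaling step never leaves the cone of positive vectors. Both are harmless here: $\setD_{\Psi}$ is just the coordinate section $x_1=1$ of the positive cone, and multiplying a positive vector by $\lambda>0$ keeps it positive while fixing every $e_{A_i}$ value, so any Pareto improvement found inside $\textrm{int}(\RR^n_+)$ would descend to one inside $\setD_{\Psi}$, contradicting the optimality of $w$. Hence the combination goes through with no genuine difficulty, and the corollary follows.
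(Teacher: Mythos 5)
Your proposal is correct and is exactly the paper's argument: the corollary is obtained by feeding the Pareto optimal point supplied by Proposition~\ref{pro:existence} into Lemma~\ref{lem:existence}, with all the substantive work residing in those two results. The additional checks you record (compactness of $\setD_{\Psi}$, scale-invariance of $e_{A_i}$, and the matching of hypotheses) are consistent with how the paper sets up those preceding statements.
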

Corollary~\ref{cor:existence} means that there exists a vector $x$ of positive weights that is simultaneously Pareto optimal and also optimal in the \emph{min-max} sense of Section \ref{sec:minmax} for the error functions $e_{A_i}$, $1 \leq i \leq m$. 

Finally, to illustrate the above results, we revisit Example \ref{ex:Saaty}. 
\begin{example}[Example \ref{ex:Saaty} Revisited]
\label{ex:Saaty2}
Let $C, A_1, \ldots, A_5$ be as in Example \ref{ex:Saaty}.  Taking $\alpha_i$, $1 \leq i \leq 5$ to be the $i$th entry of the max eigenvector of $C$, normalised so that $\alpha_1 = 1$, we apply Theorem \ref{thm:existence} to compute Pareto optimal solutions in the set $\setD_{\Psi}$ by minimising the weighted sum 
$$\sum\limits_{i=1}^{m} \alpha_i e_{A_i}(x)$$
using the MATLAB function {{\it fminsearch}}.

Observe that $\mu(\hat S)=4.985$, so there is no common subeigenvector in this case. Next, we calculate the max eigenvector of $C:$ $$\left [ \begin{array}{c} 1 \\ 1.495 \\ 2.236 \\ 3.344\\ 0.897 \\ \end{array} \right ]$$  We find that there are multiple Pareto optimal points giving at least two possible distinct rankings: $1 > 3 > 4 > 2$ and $1 > 3 > 2 > 4$. Notice that first ranking scheme is the same as the one obtained from the classical method used in Example \ref{ex:Saaty}.  The second ranking scheme is also reasonable, since if we analyse the local rankings associated with the set of {\it{SR}}-matrices in detail, we see that $2>4$ for $A_1$, $A_2$ and $A_5$. In particular, $2$ is preferred to all other alternatives for $A_2$. 
\end{example}
\section{Conclusions and Future Work}
\label{sec:conclus}
Building on the work of Elsner and van~den~Driessche~\cite{Drie3, Drie4}, we have considered a max-algebraic approach to the multi-criteria AHP within the framework of multi-objective optimisation.  Papers~\cite{Drie3, Drie4} characterise the max eigenvectors and subeigenvectors of a \emph{single} {\it SR}-matrix as solving an optimisation problem with a single objective.  We have extended this work to the multi-criteria AHP by directly considering several natural extensions of this basic optimisation problem to the multiple objective case.  Specifically, we have presented results concerning the existence of: globally optimal solutions; min-max optimal solutions; Pareto optimal solutions.  The principal contribution of the paper is to draw attention to this max-algebraic perspective on the multi-criteria AHP, with the main results in this direction being: establishing the connection between the generalised spectral radius and min-max optimal solutions (Proposition \ref{pro:OPT2}); proving the existence of Pareto optimal solutions and showing that it is possible to simultaneously solve the Pareto and min-max optimisation problems (Proposition \ref{pro:existence} and Corollary \ref{cor:existence}).  We have also related the existence of globally optimal solutions to the existence of common subeigenvectors and highlighted connections between this question and commutativity (Theorem \ref{thm:3b3cond}).  

Buket Benek Gursoy and Oliver Mason acknowledge the support of Irish Higher Educational Authority (HEA)
PRTLI Network Mathematics Grant; Serge\u{\i} Sergeev is supported by EPSRC Grant RRAH15735, RFBR-CNRS Grant 11-01-93106 and
RFBR Grant 12-01-00886.


\end{document}